\newtheorem{theorem}{Theorem}[section]
\newtheorem{lemma}[theorem]{Lemma}
\theoremstyle{definition}
\newtheorem{definition}[theorem]{Definition}
\newtheorem{proposition}[theorem]{Proposition}
\theoremstyle{remark}
\newtheorem{remark}[theorem]{Remark}
\numberwithin{equation}{section}
\newcommand{\abs}[1]{\lvert#1\rvert}
\begin{document}

\title[Optimal Decay Viscoelastic]
 {Optimal Time Decay Rate for the Compressible Viscoelastic Equations in Critical Spaces}

\author[J.X.Jia]{Junxiong Jia}
\address{Department of Mathematics,
Xi'an Jiaotong University,
 Xi'an
710049, China; Beijing Center for Mathematics and Information Interdisciplinary Sciences (BCMIIS);}
\email{jjx425@gmail.com}

\author[J. Peng]{Jigen Peng}
\address{Department of Mathematics,
Xi'an Jiaotong University,
 Xi'an
710049, China; Beijing Center for Mathematics and Information Interdisciplinary Sciences (BCMIIS);}
\email{jgpeng@mail.xjtu.edu.cn}




\subjclass[2010]{76N10, 35Q35, 35D35}



\keywords{Compressible viscoelastic fluids, Critical space framework, Optimal time decay}

\begin{abstract}
In this paper, we are concerned with the convergence rates of the global strong solution to constant 
equilibrium state for the compressible viscoelastic fluids in the whole space. 
We combine both analysis about Green's matrix method and energy estimate method to get
optimal time decay rate in critical Besov space framework. Our result imply the optimal 
$L^{2}$-time decay rate and only need the initial data to be small in critical Besov space which have
very low regularity compared with traditional Sobolev space. 
\end{abstract}

\maketitle


\section{Introduction and main results}

Many fluids do not satisfy Newtonian law.
A viscoelastic fluid of the Oldroyd type is one of the classical non-Newtonian fluids which exhibits elastic behavior, such as memory effects.
The elastic properties of the fluid are described by associating the fluid motions with an energy functional of deformation tensor $U$.
Let us assume the elastic energy is $W(U)$, then the compressible viscoelastic system can be written as
\begin{align}\label{Viscoelastic 1}
\begin{split}
\begin{cases}
\partial_{t}\rho + \mathrm{div}(\rho u) = 0,    \\
\partial_{t}(\rho v) + \mathrm{div} (\rho v \otimes v) + \nabla P(\rho) = \mathrm{div}(2\mu \mathcal{D}(v) + \nabla (\lambda \mathrm{div}(v)))   \\
\quad\quad + \mathrm{div}\left( \frac{W_{U}(U)U^{T}}{\mathrm{det}(U)} \right), \\
\partial_{t}U + u\cdot \nabla U = \nabla u U.
\end{cases}
\end{split}
\end{align}
Here $\rho$ is the density and $v(x, t)$ is the velocity of the fluid.
The pressure $P(\rho)$ is a given state equation with $P'(\rho) > 0$ for any $\rho$ and
$\mathcal{D}(v) = \frac{1}{2}(\nabla v + \nabla v^{T})$ is the strain tensor. The Lam\'{e} coefficient
$\mu$ and $\lambda$ are assumed to satisfy
\begin{align}
\mu > 0 \quad \mathrm{and} \quad \lambda+2\mu > 0.
\end{align}
Such a condition ensures ellipticity for the operator $-\nabla (2\mu \mathcal{D}\cdot) - \nabla (\lambda \nabla \cdot)$ and
is satisfied in the physical case, where $\lambda + 2\mu / N \approx 0$. Moreover, $W_{U}(U)$ is the Piola-Kirchhoff tensor
and $\frac{W_{U}(U)U^{T}}{\mathrm{det}(U)}$ the Cauchy-Green tensor, respectively. For a special case of the Hookean
linear elasticity, $W(U) = |U|^{2}$.

For the incompressible viscoelastic fluids, there are many important works recently. In \cite{leizhen2008}, the author proved the well posedness problem
and find the relation
\begin{align*}
\nabla_{k}F^{ij} - \nabla_{j}F^{ik} = F^{lj}\nabla_{l}F^{ik} - F^{lk}\nabla_{l}F^{ij},
\end{align*}
with $F = U - I$. This relation indicates that the linear term $\nabla\times F$ is actually a higher order term.
F. Lin, C. Liu and P. Zhang \cite{Linfanghua2005,Linfanghua2008} proved the local well posedness in Hilbert space $H^{s}$, and global well posedness
with small initial data. In the proof of the global part, they capture the damping mechanism on $F$ through very subtle energy estimates.
At last, in \cite{LpIncompressibleViscoelastic}, the author proved the global well posedness of the incompressible version of system (\ref{Viscoelastic 1})
in the critical $L^{p}$ framework which allows us to construct the unique global solution for highly oscillating initial velocity.

For compressible viscoelastic fluids, in \cite{L2viscoelastic,XianpengHu} the authors proved
the local and global well-posedness in the $L^{2}$ based critical Besov type space. Their work used the properties
of the viscoelastic fluids deeply and their results indicated that the deformation tensor $U$ plays a similar role as the density $\rho$.
It should be mentioned that the global existence of a smooth solutions is still an open problem, even in for incompressible viscoelastic fluids.
P. Lions and N. Masmoudi \cite{Lions2000} proved the global existence of a weak solution with general initial data in the case that the contribution of the strain rate in the constitutive equation is neglected.
Recently, in \cite{Jia2014}, the author proved the global well-posedness in $L^{p}$ based critical Besov space.

Besides well-posedness theory, optimal time decay rate problem is another important subject.
There are many papers studied optimal time decay rate for compressible Navier-Stokes system \cite{ponce1985,yanguo2012,HailiangLi2009,hoff1995,hoff1997,liutaiping1998,DLLi,Okita2014}.
However, due to the complexity of the compressible viscoelastic equations, there are little results about viscoelastic system.
Recently, X. Hu and G. Wu in \cite{decayhu} give a detailed analysis about time-decay rate in the Sobolev space framework.
They split the whole system into two small systems and then the analysis becomes possible.
In \cite{Jia2014}, the author used estimates in homogeneous space and negative Besov space to
give a slow decay rate when the initial data only small in Besov space with low regularity.
The main goal of this paper is to get the optimal time decay rate when the initial data just small in critical Besov space framework.
Hence, we can link the results in \cite{decayhu} and \cite{Jia2014} to give a more elaborate
characterization about time decay rate for compressible viscoelastic system.

In paper \cite{L2viscoelastic,XianpengHu}, they proved the following proposition which reveal some intrinsic properties about
compressible viscoelastic equations.
\begin{proposition}\label{specialpro}
The density $\rho$ and deformation tensor $U$ in (\ref{Viscoelastic 1}) satisfy the following relations:
\begin{align}\label{special1}
\begin{split}
& \mathrm{div}\left( \frac{U^{T}}{\mathrm{det}U} \right) = 0, \quad \mathrm{div}(\rho U^{T}) = 0, \quad \rho \mathrm{det}U = 1,   \\
& \text{and} \quad U^{lk} \nabla_{l}U^{ij} - U^{lj}\nabla_{l}U^{ik} = 0,
\end{split}
\end{align}
if the initial data $(\rho, U)|_{t = 0} = (\rho_{0}, U_{0})$ satisfies
\begin{align}\label{special2}
\begin{split}
& \mathrm{div}\left( \frac{U_{0}^{T}}{\mathrm{det}U_{0}} \right) = 0, \quad \mathrm{div}(\rho_{0} U_{0}^{T}) = 0, \quad \rho_{0} \mathrm{det}U_{0} = 1,   \\
& \text{and} \quad U_{0}^{lk} \nabla_{l}U_{0}^{ij} - U_{0}^{lj}\nabla_{l}U_{0}^{ik} = 0,
\end{split}
\end{align}
respectively.
\end{proposition}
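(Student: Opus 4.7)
The plan is to prove each of the four relations in \eqref{special1} separately, by deriving an evolution equation for the associated defect quantity and showing that it solves a homogeneous linear transport-type equation; vanishing at $t=0$ then propagates to every $t$. The tools I would use are the continuity equation $D_t\rho = -\rho\,\mathrm{div}\,u$ with material derivative $D_t := \partial_t + u\cdot\nabla$, the componentwise form $D_t U^{ij} = U^{kj}\partial_k u^i$ of the $U$-equation, the Jacobi identity $D_t\det U = \det U\cdot\mathrm{div}\,u$ (immediate from $\partial_t\det U = \det U\,\mathrm{tr}(U^{-1}\partial_t U)$ applied to the $U$-equation), and the commutator $[D_t,\partial_l] = -(\partial_l u^m)\partial_m$ acting on scalars.

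I would first establish $\rho\det U = 1$: combining the continuity equation with the Jacobi identity gives $D_t(\rho\det U) = 0$, so $\rho\det U$ is constant along particle paths and therefore equal to its initial value $1$ everywhere. With this in hand, $\rho U^T = U^T/\det U$, which reduces the first two relations of \eqref{special1} to a single one; it then suffices to prove $\mathrm{div}(U^T/\det U) = 0$.

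For that, introduce $W^{ij} := U^{ij}/\det U$. A direct calculation from the transport laws for $U^{ij}$ and $\det U$ gives
\begin{equation*}
\partial_t W^{ij} + \partial_l(u^l W^{ij}) = W^{kj}\,\partial_k u^i.
\end{equation*}
Applying $\partial_i$ and setting $H^j := \partial_i W^{ij}$, I expect the terms not proportional to $H^j$ to cancel after renaming dummy indices: the pair $(\partial_i W^{kj})(\partial_k u^i)$ against $(\partial_i u^l)(\partial_l W^{ij})$, and the pair $W^{kj}\partial_k\,\mathrm{div}\,u$ against $(\partial_i\,\mathrm{div}\,u)W^{ij}$. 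This should leave the homogeneous continuity equation $\partial_t H^j + \partial_l(u^l H^j) = 0$, from which $H^j \equiv 0$ follows by uniqueness.

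Finally, for $C^{ijk} := U^{lk}\partial_l U^{ij} - U^{lj}\partial_l U^{ik}$, applying $D_t$ and using the commutator above, the ``mixed'' triple products $U^{mk}(\partial_m u^l)(\partial_l U^{ij})$ and $U^{lk}(\partial_l u^m)(\partial_m U^{ij})$ should cancel within each of the two terms after swapping $l\leftrightarrow m$, and the second-order contributions $U^{lk}U^{mj}\partial_l\partial_m u^i$ and $U^{lj}U^{mk}\partial_l\partial_m u^i$ should cancel in the difference by the same swap combined with the symmetry $\partial_l\partial_m = \partial_m\partial_l$. What remains is the closed linear system $D_t C^{ijk} = (\partial_m u^i)\,C^{mjk}$ in the vector $(C^{1jk},\ldots,C^{Njk})$ for each fixed pair $(j,k)$, so $C^{ijk}|_{t=0} = 0$ forces $C^{ijk}\equiv 0$. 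The main obstacle throughout is the careful index bookkeeping that confirms each evolution equation closes on the defect quantity alone, with no residual source term that would break the propagation-of-zero argument.
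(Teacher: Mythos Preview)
The paper does not supply its own proof of this proposition; it is quoted from the references \cite{L2viscoelastic,XianpengHu} as a known structural fact about the system, so there is nothing in the paper to compare your argument against line by line. That said, your approach---deriving a homogeneous linear transport equation for each ``defect'' quantity and invoking propagation of zero initial data---is exactly the standard route taken in those references, and your index computations check out: the cancellations you flag for $H^j=\partial_i W^{ij}$ and for $C^{ijk}$ are genuine, yielding the closed equations $\partial_t H^j+\partial_l(u^l H^j)=0$ and $D_t C^{ijk}=(\partial_m u^i)C^{mjk}$ as you claim. The only minor caveat is that the uniqueness step for these linear transport problems tacitly requires the regularity of $u$ furnished by the existence theory (e.g.\ $\nabla u\in L^1_t L^\infty_x$), which is available here but worth stating explicitly.
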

Using Proposition \ref{specialpro}, the last term in the second equation of (\ref{Viscoelastic 1}) can be rewritten as
\begin{align}\label{transform}
\nabla_{j} \left( \frac{\frac{\partial W(U)}{\partial U^{ik}}U^{jk}}{\mathrm{det}U} \right)
= \frac{1}{\mathrm{det}U} U^{jk} \nabla_{j}\left( \frac{\partial W(U)}{\partial U^{ik}} \right)
= \rho U^{jk} \nabla_{j} \left( \frac{\partial W(U)}{\partial U^{ik}} \right).
\end{align}
As in \cite{XianpengHu}, without loss of generality, we consider Hookean linear elasticity, $W(U) = |U|^{2}$ in the following part of this paper.
Note that this does not reduce the essential difficulties. All results can be easily generalized to the case of more general elastic energy
functionals. In view of (\ref{transform}), we will consider the following system
\begin{align}\label{Viscoelastic 2}
\begin{split}
\begin{cases}
\partial_{t}\rho + \mathrm{div}(\rho v) = 0,      \\
\rho \partial_{t}v^{i} + \rho v\cdot\nabla v^{i} - \mathrm{div}(2\mu \mathcal{D}(v)) - \nabla (\lambda \mathrm{div}v) + \nabla P(\rho) = \rho U^{jk} \nabla_{j} U^{ik},       \\
\partial_{t} U + v\cdot\nabla U = \nabla v U,     \\
(\rho, v, U)|_{t=0} = (\rho_{0}, v_{0}, U_{0})
\end{cases}
\end{split}
\end{align}
where the initial data satisfies (\ref{special2}).

We now state our main result of this paper which gives the optimal $L^{2}$-time decay rate for strong solutions in critical Besov spaces.
\begin{theorem}\label{MainTheorem}
Assume that dimension $n = 3$, $\bar{\rho}$ be a constant and $I$ stands for the identity vector $(1,1,1)$.
There exists $\delta > 0$ such that if $v_{0} \in B_{2,1}^{n/2-1} \cap \dot{B}_{1,\infty}^{0}$,
$\rho_{0} - \bar{\rho} \in B_{2,1}^{n/2} \cap \dot{B}_{1,\infty}^{0}$,
$U - I \in B_{2,1}^{n/2} \cap \dot{B}_{1,\infty}^{0}$ and
\begin{align*}
\|(\rho_{0} - \bar{\rho}, U - I)\|_{B_{2,1}^{n/2} \cap \dot{B}_{1,\infty}^{0}} + \|v_{0}\|_{B_{2,1}^{n/2-1} \cap \dot{B}_{1,\infty}^{0}} \leq \delta,
\end{align*}
then problem (\ref{Viscoelastic 2}) has a unique global solution
$(\rho - \bar{\rho}, v, U - I) \in C(\mathbb{R}^{+}; B_{2,1}^{n/2}) \times \left( C(\mathbb{R}^{+}; B_{2,1}^{n/2-1})
\cap L^{1}(\mathbb{R}^{+}; \dot{B}_{2,1}^{n/2+1}) \right)^{n} \times \left( C(\mathbb{R}^{+}; B_{2,1}^{n/2}) \right)^{n\times n}$.
Furthermore, there exists constant $C_{0} > 0$, and we have
\begin{align}\label{DecayRate}
\|(\rho - \bar{\rho}, v, U - I)(t)\|_{B_{2,1}^{n/2-1}} \leq C_{0} (1+t)^{-n/4},
\end{align}
for $t \geq 0$.
\end{theorem}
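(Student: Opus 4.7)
The plan is to combine a Green's matrix analysis of the linearization about the equilibrium $(\bar\rho,0,I)$ with high/low-frequency energy estimates in critical Besov spaces. Global existence in the announced class may be borrowed from the well-posedness theory of \cite{L2viscoelastic,XianpengHu,Jia2014}, so the whole burden lies in proving the decay (\ref{DecayRate}). Setting $\sigma:=\rho-\bar\rho$ and $E:=U-I$, and using (\ref{transform}), system (\ref{Viscoelastic 2}) recasts into the perturbative form
\begin{align*}
\partial_t\sigma+\bar\rho\,\mathrm{div}\,v &= N_1, \\
\bar\rho\partial_t v-\mu\Delta v-(\mu+\lambda)\nabla\mathrm{div}\,v+P'(\bar\rho)\nabla\sigma-\bar\rho\,\mathrm{div}\,E &= N_2, \\
\partial_t E-\nabla v &= N_3,
\end{align*}
where the $N_i$'s are at least quadratic in $(\sigma,v,E)$.

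I would then Fourier-transform the linear part and apply the Helmholtz decomposition $v=\mathcal P v+\mathcal Q v$ (and similarly for $E$), which decouples the linearization into a compressible block coupling $(\sigma,\mathcal Q v,\mathcal Q E)$ and an incompressible block coupling $(\mathcal P v,\mathcal P E)$. A spectral analysis in the spirit of \cite{decayhu} shows that, for low frequencies $|\xi|\le r_0$, the dispersion eigenvalues behave like $-c|\xi|^2$ (parabolic), while for high frequencies they enjoy a uniform positive spectral gap combining the parabolic smoothing of the velocity with the damping of the deformation tensor. Via a Littlewood--Paley decomposition this produces the dispersive estimate
\begin{align*}
\|e^{tL}U_0\|_{B_{2,1}^{n/2-1}}\lesssim(1+t)^{-n/4}\bigl(\|U_0\|_{\dot B_{1,\infty}^0}+\|U_0\|_{B_{2,1}^{n/2-1}}\bigr)
\end{align*}
for the semigroup $e^{tL}$ generated by the linearization.

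The decay rate is then extracted from the Duhamel identity $U(t)=e^{tL}U_0+\int_0^t e^{(t-s)L}N(U(s))\,ds$, treated by splitting $U$ into its low- and high-frequency pieces. For the high-frequency part, the critical-norm estimates of the global existence theory provide integrable-in-time control. For the low-frequency part, the key observation is that the $\dot B_{1,\infty}^0$ norm is essentially propagated by the nonlinear flow via paraproduct bounds of the type
\begin{align*}
\|fg\|_{\dot B_{1,\infty}^0}\lesssim\|f\|_{\dot B_{1,\infty}^0}\|g\|_{B_{2,1}^{n/2}}+\|f\|_{B_{2,1}^{n/2}}\|g\|_{\dot B_{1,\infty}^0}.
\end{align*}
Introducing the time-weighted quantity
\begin{align*}
\mathcal M(t):=\sup_{0\le s\le t}(1+s)^{n/4}\|U(s)\|_{B_{2,1}^{n/2-1}}+\sup_{0\le s\le t}\|U(s)\|_{\dot B_{1,\infty}^0},
\end{align*}
plugging the dispersive bound and the paraproduct estimates into Duhamel's formula, and invoking the smallness of the initial data in the critical norm, I expect an inequality of the form $\mathcal M(t)\lesssim\mathcal M(0)+\mathcal M(t)^2$, from which (\ref{DecayRate}) follows by a standard continuity argument.

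The principal obstacle will be closing the nonlinear estimates in the very weak norm $\dot B_{1,\infty}^0$. The three distinct nonlinearities, namely the convection $v\cdot\nabla(\cdot)$, the pressure perturbation $[P'(\rho)/\rho-P'(\bar\rho)/\bar\rho]\nabla\sigma$, and the viscoelastic coupling $\rho U^{jk}\nabla_j U^{ik}$, each impose a different distribution of derivatives between the factors, and the composition estimate for $P(\bar\rho+\sigma)-P(\bar\rho)-P'(\bar\rho)\sigma$ in the negative-index Besov space $\dot B_{1,\infty}^0$ will require particular care. Choosing a decomposition that simultaneously closes the low-frequency $\dot B_{1,\infty}^0$ bound and matches with the high-frequency parabolic/damping estimates is the technical heart of the argument.
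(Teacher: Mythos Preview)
Your overall architecture---Green's matrix analysis at low frequencies, energy estimates at high frequencies, and a time-weighted bootstrap closed via Duhamel---matches the paper's, and your Helmholtz splitting is essentially equivalent to the paper's decomposition into three $2\times 2$ subsystems in the variables $(a,d)$, $(\mathcal{E},d)$, $(F^T-F,\Omega)$. However, two concrete choices would prevent your bootstrap from closing. First, your functional $\mathcal{M}(t)$ carries time-weighted control only of $\|U\|_{B_{2,1}^{n/2-1}}$, whereas the paper's quantity is $(1+t)^{n/4}\bigl(\|a,F\|_{B^{n/2}}+\|v\|_{B^{n/2-1}}\bigr)$, with one extra derivative on the density and deformation tensor. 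This asymmetry is essential: in the low-frequency Duhamel term one needs the source to obey $\|N(s)\|_{\dot B_{1,\infty}^0}\lesssim (1+s)^{-n/2}$ to recover $(1+t)^{-n/4}$ after convolution, and a term like $v\cdot\nabla a$ only yields this if \emph{both} factors decay like $(1+s)^{-n/4}$ in $L^2$; that forces decaying control of $\nabla a$ in $L^2$, hence of $a$ in $B^{n/2}$.

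Second, your paraproduct route $\|fg\|_{\dot B_{1,\infty}^0}\lesssim\|f\|_{\dot B_{1,\infty}^0}\|g\|_{B^{n/2}}+\cdots$ does not close: the $B^{n/2}$ factor is merely bounded from the global theory, so the source has no decay and the time convolution grows like $t^{1/4}$. The paper avoids paraproducts here entirely, using instead $L^1\hookrightarrow\dot B_{1,\infty}^0$ together with $\|fg\|_{L^1}\le\|f\|_{L^2}\|g\|_{L^2}$ to extract decay from both factors. For nonlinearities carrying a full gradient of $v$ (e.g.\ $a\,\mathrm{div}\,v$, $v\cdot\nabla v$) the paper does not aim at $(1+s)^{-n/2}$ but isolates $f(s):=\|v(s)\|_{\dot B^{n/2+1}}$, which lies in $L^1(0,\infty)$ by the global existence theorem, to obtain the mixed bound $(1+s)^{-n/4}M(t)f(s)$; this contributes a term $C\delta M(t)$ after convolution, absorbed by smallness. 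Your plan neither invokes this $L^1_t$ parabolic smoothing nor the resulting linear-in-$M$ term, and propagating $\sup_s\|U(s)\|_{\dot B_{1,\infty}^0}$ is a red herring: the paper uses the $\dot B_{1,\infty}^0$ norm only on the initial data and on the nonlinear sources, never on the solution at positive times.
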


\begin{remark}\label{MainRemark_1}
From \cite{decayhu}, we know that the optimal $L^{2}$-time decay rate for compressible viscoelastic equations is
\begin{align}\label{OptimalTimeDecayTradictional}
\|(\rho - \bar{\rho}, v, U-I)(t)\|_{L^{2}} \leq C (1+t)^{-n/4}.
\end{align}
Due to $B_{2,1}^{n/2-1} \subset L^{2}$, the convergence rate of (\ref{DecayRate}) is optimal.
\end{remark}

To prove Theorem \ref{MainTheorem}, we split the system by Littlewood-Paley operator to
low frequency part and high frequency part. For the low frequency part, we decompose the system into
three small system and analyze the green's matrix carefully as in \cite{Jia2014, decayhu} for each small system.
Due to the fine properties of homogeneous space and singular operators, we can then combine the estimates
for small systems together to finally obtain an estimates about the whole system.
For the high frequency part, we reformulate the system as in \cite{L2viscoelastic} and using energy estimates
in Besov space framework to get an appropriate a prior estimates.

The paper is organized as follows.
In Section 2, we introduce the notation, some properties of Besov space and some important Lemmas.
In Section 3, we split the system into three small system and give the estimates for low frequency part.
In Section 4, we transform the system into an equivalent form and prove an estimates for high frequency part.
In Section 5, we give the proof of Theorem \ref{MainTheorem}.

\section{Preliminaries}

In this section we first introduce the notation which will be used throughout this paper.
Secondly, we give some basic knowledge about Besov space.
At last, we present some useful Lemmas and Theorems.

\subsection{Notation}

Let $n$ stands for the dimension, $L^{p} (1\leq p \leq \infty)$ denote the usual $L^{p}$-Lebesgue space on $\mathbb{R}^{n}$.
$[z]$ stands for the integer part of a number $z \in \mathbb{R}$.
The inner-product of $L^{2}$ is denoted by $(\cdot , \cdot)$.
If $S$ is any nonempty set, sequence space $\ell^{p}(S)$ denotes the usual $\ell^{p}$ sequence space on $S$.
For any integer $\ell \geq 0$, $\nabla^{\ell}f$ denotes all of $\ell$-th derivatives of $f$.

For a function $f$, we denote its Fourier transform by $\mathcal{F}[f] = \hat{f}$:
\begin{align*}
\mathcal{F}[f](\xi) = \hat{f}(\xi) = (2\pi)^{-n/2} \int_{\mathbb{R}^{n}} f(x) e^{-ix\cdot\xi} dx.
\end{align*}
The inverse of $\mathcal{F}$ is denoted by $\mathcal{F}^{-1}[f] = \check{f}$:
\begin{align*}
\mathcal{F}^{-1}[f](x) = \check{f}(x) = (2 \pi)^{-n/2} \int_{\mathbb{R}^{n}} f(\xi) e^{i\xi \cdot x} d\xi.
\end{align*}

\subsection{Besov spaces}

In this section, we will give some basic knowledge about Besov space, which can be found in \cite{FourierPDE}.
First we introduce the dyadic partition of unity. We can use for instance any $(\phi, \chi) \in C^{\infty}$,
such that $\phi$ is supported in $\{\xi \in \mathbb{R}^{n} : 3/4 \leq \abs{\xi} \leq 8/3 \}$,
$\chi$ is supported in $\{ \xi \in \mathbb{R}^{n} : |\xi| \leq 4/3 \}$ such that
\begin{align*}
& \chi(\xi) + \sum_{q \geq 0} \phi(2^{-q}\xi) = 1 \quad \xi \in \mathbb{R}^{n}, \\
& \sum_{q \in \mathbb{Z}} \phi(2^{-q} \xi) = 1 \quad \mathrm{if} \quad \xi \neq 0.
\end{align*}
Denoting $h = \mathcal{F}^{-1}[\phi]$ and $\tilde{h} = \mathcal{F}^{-1}\chi$,
we define the dyadic blocks as follows
\begin{align*}
& \Delta_{-1}u = \chi(D)u = \tilde{h} * u, \\
& \Delta_{q}u = \phi(2^{-q}D)u = 2^{qn}\int_{\mathbb{R}^{n}} h(2^{q}y)u(x-y) dy \quad \text{if} \quad q \geq 0, \\
& \dot{\Delta}_{q}u = \phi(2^{-q}D)u = 2^{qn}\int_{\mathbb{R}^{n}} h(2^{q}y) u(x-y) dy \quad \text{if} \quad q \in \mathbb{Z}.
\end{align*}
The low-frequency cut-off operator is defined by
\begin{align*}
S_{q}u = \sum_{-1 \leq k \leq q-1}\Delta_{q}u, \quad \dot{S}_{q}u = \sum_{k\leq q-1} \dot{\Delta}_{k}u.
\end{align*}
The formal two decompositions
\begin{align*}
u = \sum_{q \geq -1} \Delta_{q}u , \quad u = \sum_{q\in \mathbb{Z}} \dot{\Delta}_{q} u
\end{align*}
are called inhomogeneous and homogeneous Littlewood-Paley decomposition respectively.

Let us give the definition of inhomogeneous Besov space as follows.
\begin{definition}
For $s \in \mathbb{R}$ and $1\leq p, r\leq \infty$, and $u \in \mathcal{S}'$. The inhomogeneous Besov space
$B_{p,r}^{s}$ consists of distributions $u$ in $\mathcal{S}'$ such that
\begin{align*}
\|u\|_{B_{p,r}^{s}} := \left( \sum_{q \geq -1} 2^{rjs} \|\Delta_{q}u\|_{L^{p}}^{r} \right)^{1/r} < +\infty.
\end{align*}

\end{definition}

Let us now introduce the homogeneous Besov space.
\begin{definition}
We denote by $\mathcal{S}_{h}'$ the space of tempered distributions $u$ such that
\begin{align*}
\lim_{q \rightarrow -\infty} S_{q}u = 0 \quad \mathrm{in} \quad \mathcal{S}'.
\end{align*}
\end{definition}

\begin{definition}
Let $s$ be a real number and $(p,r)$ be in $[1, \infty]^{2}$. The homogeneous Besov space $\dot{B}_{p,r}^{s}$ consists of distributions
$u$ in $\mathcal{S}_{h}'$ such that
\begin{align*}
\|u\|_{\dot{B}_{p,r}^{s}} := \left( \sum_{q \in \mathbb{Z}} 2^{rjs} \|\dot{\Delta}_{q}u\|_{L^{p}}^{r} \right)^{1/r} < +\infty.
\end{align*}
\end{definition}
From now on, the notation $\dot{B}_{p}^{s}$, $B_{p}^{s}$ will stand for $\dot{B}_{p,1}^{s}$ and $B_{p,1}^{s}$ respectively.
The notation $\dot{B}^{s}$, $B^{s}$ will stand for $\dot{B}_{2,1}^{s}$ and $B_{2,1}^{s}$ respectively.

The study of non stationary PDE's usually requires spaces of type $L_{T}^{r}(X) := L^{r}(0,T; X)$ for appropriate Banach spaces $X$.
In our case, we expect $X$ to be a Besov spaces, so that it is natural to localize the equations through Littlewood-Paley decomposition.
We then get estimates for each dyadic block and perform integration in time. However, in doing so, we obtain bounds in spaces which are not
of type $L^{r}(0,T; B_{p}^{s})$ or $L^{r}(0,T; \dot{B}_{p}^{s})$. This approach was initiated in \cite{chemin}
naturally leads to the following definitions for the inhomogeneous Besov space.
\begin{definition}
Let $(r,p) \in [1, +\infty]^{2}$, $T \in (0, +\infty]$ and $s\in \mathbb{R}$. We set
\begin{align*}
\|u\|_{\tilde{L}_{T}^{r}(B_{p}^{s})} := \sum_{q \in \mathbb{Z}} 2^{qs} \left( \int_{0}^{T} \|\Delta_{q}u(t)\|_{L^{p}}^{r} \, dt \right)^{1/r}
\end{align*}
and
\begin{align*}
\tilde{L}_{T}^{r}(B_{p}^{s}) := \left\{ u \in L_{T}^{r}(B_{p}^{s}), \|u\|_{\tilde{L}_{T}^{r}(B_{p}^{s})}< +\infty \right\}.
\end{align*}
\end{definition}
Owing to Minkowski inequality, we have $\tilde{L}_{T}^{r}(B_{p}^{s}) \hookrightarrow L_{T}^{r}(B_{p}^{s})$. That embedding is strict in general
if $r>1$. We will denote by $\tilde{C}_{T}(B_{p}^{s})$ the set of function $u$ belonging to
$\tilde{L}_{T}^{\infty}(B_{p}^{s})\cap C([0,T]; B_{p}^{s})$.
For the homogeneous Besov space, we can define similarly.

Let $X$ stands for $B$ or $\dot{B}$, we have the following interpolation inequality:
\begin{align*}
\|u\|_{\tilde{L}_{T}^{r}(X_{p}^{s})} \leq \|u\|_{\tilde{L}_{T}^{r_{1}}(X_{p}^{s_{1}})}^{\theta} \|u\|_{\tilde{L}_{T}^{r_{2}}(X_{p}^{s_{2}})}^{1-\theta},
\end{align*}
with
\begin{align*}
\frac{1}{r} = \frac{\theta}{r_{1}} + \frac{1-\theta}{r_{2}} \quad \mathrm{and} \quad s = \theta s_{1} + (1-\theta) s_{2},
\end{align*}
and the following embeddings
\begin{align*}
\tilde{L}_{T}^{r}(X_{p}^{n/p}) \hookrightarrow L_{T}^{r}(\mathcal{C}_{0}) \quad \mathrm{and} \quad \tilde{C}_{T}(X_{p}^{n/p})
\hookrightarrow C([0,T]\times \mathbb{R}^{n}).
\end{align*}

Another important space is the bybrid Besov space, we give the definitions and collect some properties.
\begin{definition}
let $s,t \in \mathbb{R}$. We set
\begin{align*}
\|u\|_{B_{q,p}^{s,t}} := \sum_{q \leq R_{0}} 2^{qs} \|\dot{\Delta}_{q}u\|_{L^{q}} + \sum_{q > R_{0}}2^{qt} \|\dot{\Delta}_{q}u\|_{L^{p}}.
\end{align*}
and
\begin{align*}
B_{q,p}^{s,t}(\mathbb{R}^{N}) := \left\{ u \in \mathcal{S}_{h}'(\mathbb{R}^{N}) : \|u\|_{B_{q,p}^{s,t}} < +\infty \right\},
\end{align*}
where $R_{0}$ is a fixed large enough number determined in the proof of global existence.
\end{definition}

\begin{lemma}

1) We have $B_{2, 2}^{s,s} = \dot{B}^{s}$.

2) If $s\leq t$ then $B_{p,p}^{s,t} = \dot{B}_{p}^{s} \cap \dot{B}_{p}^{t}$. Otherwise, $B_{p,p}^{s,t} = \dot{B}_{p}^{s} + \dot{B}_{p}^{t}$.

3) The space $B_{p,p}^{0,s}$ coincide with the usual inhomogeneous Besov space.

4) If $s_{1} \leq s_{2}$ and $t_{1} \geq t_{2}$ then $B_{p,p}^{s_{1},t_{1}} \hookrightarrow B_{p,p}^{s_{2},t_{2}}$.

5) Interpolation: For $s_{1}, s_{2}, \sigma_{1}, \sigma_{2} \in \mathbb{R}$ and $\theta \in [0,1]$, we have
\begin{align*}
\|f\|_{B_{2,p}^{\theta s_{1} + (1-\theta)s_{2}, \theta \sigma_{1} + (1-\theta)\sigma_{2}}} \leq \|f\|_{B_{2,p}^{s_{1},\sigma_{1}}}^{\theta}
\|f\|_{B_{2,p}^{s_{2},\sigma_{2}}}^{1-\theta}.
\end{align*}
\end{lemma}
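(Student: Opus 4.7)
The plan is to prove each of the five properties by direct examination of the defining series, splitting at the threshold index $R_{0}$ and using standard facts about $\dot{\Delta}_q$. Throughout I will write $\|u\|_{B^{s,t}_{q,p}} = A(u) + B(u)$ where $A(u):=\sum_{q\le R_0}2^{qs}\|\dot\Delta_q u\|_{L^q}$ and $B(u):=\sum_{q>R_0}2^{qt}\|\dot\Delta_q u\|_{L^p}$. Item (1) is immediate: when $s=t$ and $p=q=2$ the threshold disappears and the sum reduces to $\sum_{q\in\mathbb{Z}}2^{qs}\|\dot\Delta_q u\|_{L^2}$, which is exactly $\|u\|_{\dot B^s}$. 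Item (4) reduces to a monotonicity check: on the low-frequency block $q\le R_0$, the hypothesis $s_1\le s_2$ gives $2^{q(s_2-s_1)}\le 2^{R_0(s_2-s_1)}$, so $A_{s_2}(u)\le 2^{R_0(s_2-s_1)}A_{s_1}(u)$; on the high-frequency block $q>R_0$, the hypothesis $t_1\ge t_2$ gives $2^{q(t_2-t_1)}\le 2^{R_0(t_2-t_1)}$, so $B_{t_2}(u)\le 2^{R_0(t_2-t_1)}B_{t_1}(u)$, and summing yields the embedding with an explicit constant depending on $R_0$. Item (3) is proved by using that for the usual inhomogeneous blocks one has $\Delta_{-1}u=\dot S_{R_0+1}u$ up to a finite overlap, and $\Delta_q u\simeq \dot\Delta_q u$ for $q>R_0$, so the $B^{0,s}_{p,p}$ norm and the $B^{s}_{p,1}$ norm control each other, after replacing a finite low-frequency sum by a single $L^p$ norm via Bernstein's inequality.

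For item (2), the main case is $s\le t$. The embedding $B^{s,t}_{p,p}\hookrightarrow \dot B^s_p\cap\dot B^t_p$ follows from item (4) (apply it twice, with $(s,t)=(s_2,t_2)$ the hybrid exponents and $(s_1,t_1)$ once equal to $(s,s)$, once to $(t,t)$). For the reverse inclusion, note that under $s\le t$ we have $2^{qt}\le 2^{(t-s)R_0}2^{qs}$ for $q\le R_0$ and $2^{qs}\le 2^{(s-t)R_0}2^{qt}$ for $q>R_0$, so each of $\|u\|_{\dot B^s_p}$ and $\|u\|_{\dot B^t_p}$ already dominates one half of $\|u\|_{B^{s,t}_{p,p}}$. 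The case $s>t$ requires the additive decomposition: given $u\in B^{s,t}_{p,p}$, set $u_1:=\sum_{q\le R_0}\dot\Delta_q u$ and $u_2:=\sum_{q>R_0}\dot\Delta_q u$, both convergent in $\mathcal{S}'_h$ since the partial sums are controlled by the hybrid norm. Then $u_1\in\dot B^s_p$, $u_2\in\dot B^t_p$, and $u=u_1+u_2$, giving $\|u\|_{\dot B^s_p+\dot B^t_p}\le \|u\|_{B^{s,t}_{p,p}}$. Conversely, for any splitting $u=v_1+v_2$ with $v_i$ in the respective homogeneous spaces, quasi-orthogonality of $\dot\Delta_q$ yields $\|\dot\Delta_q u\|_{L^p}\le C(\|\dot\Delta_q v_1\|_{L^p}+\|\dot\Delta_q v_2\|_{L^p})$ and one sums separately over the two ranges, again using monotonicity of $2^{q\alpha}$ to control the "wrong" pieces by the $R_0$ constant.

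For the interpolation inequality (5), write $\sigma(\theta):=\theta s_1+(1-\theta)s_2$ and $\tau(\theta):=\theta\sigma_1+(1-\theta)\sigma_2$, and apply Hölder's inequality with exponents $1/\theta$ and $1/(1-\theta)$ to the two pieces of the hybrid norm separately: for $q\le R_0$,
\begin{equation*}
2^{q\sigma(\theta)}\|\dot\Delta_q f\|_{L^2}=\bigl(2^{qs_1}\|\dot\Delta_q f\|_{L^2}\bigr)^{\theta}\bigl(2^{qs_2}\|\dot\Delta_q f\|_{L^2}\bigr)^{1-\theta},
\end{equation*}
and similarly on $q>R_0$ with $(\sigma_1,\sigma_2)$ in the $L^p$ norm; summing and applying discrete Hölder gives the claimed bound (for general $r$ in the hybrid definition an $\ell^r$-Hölder would enter, but as stated one only uses $r=1$). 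The only genuinely delicate point is item (2) in the case $s>t$: one must verify that the partial sums $\sum_{|q|\le N}\dot\Delta_q u$ actually converge in $\mathcal{S}'_h$ and that the resulting decomposition is well-defined modulo polynomials; this is where the $\mathcal{S}'_h$ hypothesis built into the definition of homogeneous Besov spaces is essential, and it is the step I expect to require the most care.
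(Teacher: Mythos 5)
The paper itself does not prove this lemma; immediately after stating it the reader is referred to \cite{FourierPDE,Danchin2005,NavierStokesLp,NavierStokesLpDanchin}, so there is no in-paper argument to compare against. Judged on its own terms, your handling of items (1), (4), (5), and of item (2) in both the regime $s\le t$ and the regime $s>t$, is sound: the $R_0$-dependent constants in (4), the additive decomposition $u=u_1+u_2$ together with the converse bound in (2), and the two discrete H\"older applications in (5) are exactly the right mechanisms, and you correctly flag the $\mathcal{S}_{h}'$ convergence of partial sums as the delicate point in the $s>t$ case of (2).

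The genuine gap is in item (3). Your sketch hinges on ``replacing a finite low-frequency sum by a single $L^p$ norm via Bernstein's inequality,'' but the low-frequency part of the hybrid norm is the \emph{infinite} sum $\sum_{q\le R_0}\|\dot{\Delta}_q u\|_{L^p}$, not a finite one. For each fixed $q\le -2$ one does have $\|\dot{\Delta}_q u\|_{L^p}=\|\dot{\Delta}_q\Delta_{-1}u\|_{L^p}\lesssim\|\Delta_{-1}u\|_{L^p}$, but this cannot be summed over $q\to-\infty$. Thus only the inclusion $B^{0,s}_{p,p}\hookrightarrow B^s_{p,1}$ is elementary (via $\|\Delta_{-1}u\|_{L^p}\le\sum_{q\le 0}\|\dot{\Delta}_q u\|_{L^p}$); the reverse direction needs the additional low-frequency $\ell^1$-summability $\sum_{q\le 0}\|\dot{\Delta}_q u\|_{L^p}<\infty$, which is a genuine extra constraint, not threshold bookkeeping. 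Indeed one can construct $u\in L^2$ with $\hat{u}$ supported in the unit ball---hence $u\in B^s_{2,1}$ for every $s$---for which $\sum_{q\le 0}\|\dot{\Delta}_q u\|_{L^2}=\infty$: concentrate $\hat{u}$ on lacunary annuli $|\xi|\sim 2^{-j^2}$ with amplitudes chosen so that $\|\dot{\Delta}_{-j^2}u\|_{L^2}\sim 1/j$. So item (3) is qualitatively different from (1), (2), (4), and your sketch as written does not close it; either a real argument supplying the low-frequency control, or an explicit additional hypothesis (restriction to $\dot B^0_{p,1}$), is required.
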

From now on, the notation $B_{p}^{s,t}$ will stand for $B_{p,p}^{s,t}$ and the notation $B^{s,t}$ will stand for $B_{2,2}^{s,t}$.
For more information about Besov space and hybrid Besov space, we give reference \cite{FourierPDE,Danchin2005,NavierStokesLp,NavierStokesLpDanchin}.

In the last of this introduction, for the reader's convenience, we list an important Lemma \cite{NavierStokesLp, L2viscoelastic} which will be used in the following.
\begin{lemma}\label{yinli}
Let $F$ be a homogeneous smooth function of degree $m$. Suppose
$1 - n/2 < \rho \leq 1 + n/2$ and $-1/n < \rho' \leq n/2+1$.
Then the following inequialities hold:
\begin{align*}
& |(F(D)\Delta_{q}(v\cdot\nabla c) | F(D)\Delta_{q}c)|  \\
& \quad
\leq \, C\alpha_{q}2^{-q(\rho'-m)}\|v\|_{\dot{B}^{n/2+1}}\|c\|_{\dot{B}^{\rho'}}
\|F(D)\Delta_{q}c\|_{L^{2}}, 	\\
& |(F(D)\Delta_{q}v\cdot\nabla c | F(D)\Delta_{q}c)|    \\
& \quad
\leq \, C\alpha_{q}2^{-q(\rho - m)}\min(2^{q},1)\|v\|_{\dot{B}^{n/2+1}}
\|c\|_{B^{\rho-1,\rho}}\|F(D)\Delta_{q}c\|_{L^{2}}, 	\\
& |(F(D)\Delta_{q}(v\cdot\nabla c) | \Delta_{q}d)| + |(\Delta_{q}(v\cdot d) | F(D)\Delta_{q}c)| 	\\
& \quad
\leq \, C\alpha_{q}2^{-q(\rho - m)}\min(2^{q},1)\|v\|_{\dot{B}^{n/2+1}}
(\|c\|_{B^{\rho-1,\rho}}\|\Delta_{q}d\|_{L^{2}}+\|d\|_{B^{\rho-1,\rho}}\|\Delta_{q}c\|_{L^{2}}), \\
& |(F(D)\Delta_{q}(v\cdot c) | \Delta_{q}d)| + |(\Delta_{q}(v\cdot \nabla d)|
F(D)\Delta_{q}c)|	\\
& \quad
\leq \, C\alpha_{q}\|v\|_{\dot{B}^{n/2+1}}
(2^{-q\rho'}\|F(D)\Delta_{q}c\|_{L^{2}}\|d\|_{\dot{B}^{\rho'}}  \\
& \quad\quad\,\,
+ 2^{-q(\rho-m)}\min(2^{q},1)\|d\|_{B^{\rho-1, \rho}}\|\Delta_{q}d\|_{L^{2}} ).
\end{align*}
\end{lemma}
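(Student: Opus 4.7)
The plan is to treat existence and decay separately. Global existence of the stated solution under the smallness assumption on $\|(\rho_0-\bar\rho, U_0-I)\|_{B_{2,1}^{n/2}} + \|v_0\|_{B_{2,1}^{n/2-1}}$ is already in \cite{L2viscoelastic,XianpengHu}, so the real work is to extract the rate $(1+t)^{-n/4}$ from the supplementary assumption $(\rho_0-\bar\rho,v_0,U_0-I)\in \dot{B}_{1,\infty}^{0}$. Following the outline of the introduction I would split the solution into a low-frequency part $\dot S_{R_0}(\cdot)$ and a high-frequency part $(\mathrm{Id}-\dot S_{R_0})(\cdot)$, estimate each regime by a dedicated technique, and close by a bootstrap on the ansatz $\mathcal{M}(t):=\sup_{0\le s\le t}(1+s)^{n/4}\|(\rho-\bar\rho,v,U-I)(s)\|_{B_{2,1}^{n/2-1}}$.

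For the low-frequency part I would linearize (\ref{Viscoelastic 2}) around $(\bar\rho,0,I)$, set $a=\rho-\bar\rho$, $E=U-I$, and, following \cite{decayhu,Jia2014}, exploit the structural identities of Proposition \ref{specialpro} to decompose the linearized system into three decoupled blocks: a dispersive-parabolic block for $(a,\mathrm{div}\,v,\mathrm{tr}\,E)$, a damped-wave block for $(\mathcal{P}v,\,\mathrm{skew}\,E)$ (where $\mathcal{P}$ is the Leray projector), and a pure damping block for the remaining entries of $E$. For each block I would compute the Green's matrix in Fourier space and show that its symbol $G(t,\xi)$ satisfies $|G(t,\xi)|\lesssim e^{-c|\xi|^{2}t}$ on $|\xi|\le 1$; this together with the standard estimate $\|e^{-c|\xi|^{2}t}\hat f\,\mathbf{1}_{|\xi|\le 1}\|_{L^{2}}\lesssim (1+t)^{-n/4}\|f\|_{\dot B_{1,\infty}^{0}}$ yields the desired $(1+t)^{-n/4}$ bound for the linear low-frequency evolution, and Duhamel turns this into a bound on the semilinear problem provided the quadratic forcing is controlled in $\dot B_{1,\infty}^{0}$ uniformly in time.

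For the high-frequency part I would reformulate the system as in \cite{L2viscoelastic}, turning the pressure-plus-elastic-stress combination into an effective source that is quadratically small and exploiting the intrinsic damping on $E$ provided by $\mathrm{div}(\rho U^{T})=0$. On each dyadic block $\dot\Delta_{q}$ with $q>R_{0}$ I would perform $L^{2}$ energy estimates using Lemma \ref{yinli} to absorb the convective and quasilinear commutators, producing a Lyapunov inequality of the form $\frac{d}{dt}X_{q}+c\,2^{2q}X_{q}\le C\eta\,2^{2q}X_{q}+(\text{source})$, where $\eta$ is small from the smallness of the $B_{2,1}^{n/2}$ norm. Grönwall then gives exponential-in-$2^{2q}t$ decay of each high-frequency block, which summed in $\ell^{1}$ over $q>R_{0}$ with weight $2^{q(n/2-1)}$ is majorised by any polynomial rate, in particular $(1+t)^{-n/4}$.

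Combining both regimes, the linear part of $\|(a,v,E)(t)\|_{B_{2,1}^{n/2-1}}$ is bounded by $C\delta(1+t)^{-n/4}$, while the Duhamel contribution is estimated using the ansatz $\mathcal{M}$, yielding $\mathcal{M}(t)\le C\delta+C\mathcal{M}(t)^{2}$ and hence the conclusion for $\delta$ small by continuity. The main obstacle is the nonlinear Duhamel step in the low-frequency regime: the quadratic terms $\mathrm{div}(\rho v)$, $\rho v\cdot\nabla v$, $\nabla v\,U$ and $\rho U^{jk}\nabla_{j}U^{ik}$ must be controlled in $\dot B_{1,\infty}^{0}$ uniformly in $t$. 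This forces a product estimate of the type $\|fg\|_{\dot B_{1,\infty}^{0}}\lesssim \|f\|_{\dot B^{n/2}}\|g\|_{\dot B_{1,\infty}^{0}}+\|f\|_{\dot B_{1,\infty}^{0}}\|g\|_{\dot B^{n/2}}$ (via Bony decomposition) and a careful interplay between the high-frequency $B_{2,1}^{n/2}$ energy bound (which stays $O(\delta)$ in time by the existence theorem) and the low-frequency $\dot B_{1,\infty}^{0}$ decay bound; getting the exponents to match without losing the sharp $n/4$ is the delicate point, and it is precisely here that the hybrid Besov formalism and the fine commutator estimates of Lemma \ref{yinli} are indispensable.
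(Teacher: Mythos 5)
Your proposal does not address the statement you were asked to prove. Lemma~\ref{yinli} is a purely harmonic-analytic commutator and paraproduct estimate: it asserts four inequalities bounding pairings of the form $(F(D)\Delta_{q}(v\cdot\nabla c)\,|\,F(D)\Delta_{q}c)$ and their variants by products of Besov norms of $v$, $c$, $d$ with a summable factor $\alpha_{q}$ and the appropriate power $2^{-q(\cdot)}$. What you have written instead is a high-level strategy for Theorem~\ref{MainTheorem} (low/high frequency splitting, Green's matrix decay, Duhamel, bootstrap on $\mathcal{M}(t)$). This is a different statement altogether, and indeed at the end of your text you explicitly invoke ``the fine commutator estimates of Lemma~\ref{yinli}'' as a tool, which would make your argument circular if it were meant to establish that lemma.

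To actually prove Lemma~\ref{yinli} you would need to work at the level of the Bony decomposition. For instance, for the first inequality one writes $v\cdot\nabla c = T_{v}\nabla c + T_{\nabla c}v + R(v,\nabla c)$, commutes $\Delta_{q}$ through the paraproduct $T_{v}$ to extract the commutator $[\Delta_{q}, S_{q'-1}v]\nabla\Delta_{q'}c$ (whose $L^{2}$ norm is controlled by $2^{-q'}\|\nabla S_{q'-1}v\|_{L^{\infty}}\|\nabla\Delta_{q'}c\|_{L^{2}}$ via the mean-value theorem and the kernel of $\Delta_{q}$), pairs the remaining $S_{q-1}v\cdot\nabla F(D)\Delta_{q}c$ term against $F(D)\Delta_{q}c$ to exploit the antisymmetry $\int S_{q-1}v\cdot\nabla(|F(D)\Delta_{q}c|^{2}) = -\int (\operatorname{div}S_{q-1}v)\,|F(D)\Delta_{q}c|^{2}$, and estimates the remaining paraproduct and remainder terms directly by $L^{2}\text{--}L^{\infty}$ H\"older and the quasi-orthogonality of the dyadic blocks. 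The degree-$m$ homogeneity of $F$ enters through $\|F(D)\Delta_{q}u\|_{L^{2}}\sim 2^{qm}\|\Delta_{q}u\|_{L^{2}}$, which is what produces the $2^{-q(\rho'-m)}$ prefactors. The index restrictions on $\rho$ and $\rho'$ are exactly what makes the remainder $R(v,\nabla c)$ sum into a Besov norm rather than diverge, and the $\min(2^{q},1)$ factor comes from the inhomogeneous/hybrid character of $B^{\rho-1,\rho}$. Note also that the paper itself does not supply a proof of this lemma; it is imported from the cited references, so the expected answer here is a commutator argument of the kind just sketched, not a decay-rate argument.
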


\subsection{Useful Theorems}
In the part, we will list two Theorems about well-posedness of equations (\ref{Viscoelastic 2}) which are
essential for our proof of Theorem \ref{MainTheorem}.
Denote
\begin{align*}
E_{p}(T) := \{ v \in C([0,T]; B^{n/p}_{p}), \, \partial_{t}v, \nabla^{2}v \in L^{1}(0, T; B^{n/p}_{p}) \}.
\end{align*}
For $v \in E_{p}(T)$ will be endowed with the norm
\begin{align*}
\|v\|_{E_{p}(T)} := \|v\|_{L_{T}^{\infty}(B^{n/p-1}_{p})} + \|\partial_{t}v, \nabla^{2}v\|_{L_{T}^{1}(B^{n/p-1}_{p})}.
\end{align*}
Through similar methods used in \cite{Jia2014} or just change the low frequency estimates in \cite{L2viscoelastic},
we will obtain Theorem \ref{localeuler}. Due to the proof has no new ingredients, we omit it.
\begin{theorem}\label{localeuler}
Let $1 < p < 2n$  and $n \geq 2$. Let $v_{0}$ be vector field in $B^{n/p-1}_{p}$. Assume that $\rho_{0}$ satisfies
$a_{0} := \rho_{0} - 1 \in B^{n/p}_{p}$ and $U_{0}$ satisfies $F_{0} := U_{0} - I \in B^{n/p}_{p}$ and
\begin{align}
\inf_{x} \rho_{0}(x) > 0.
\end{align}
Then system (\ref{Viscoelastic 2}) has a unique local solution $(\rho, v, U)$ with
$v \in E_{p}(T)$, $U-I \in C([0,T]; B^{n/p}_{p})$, $\rho$ bounded away from $0$ and $\rho - 1 \in C([0,T]; B^{n/p}_{p})$.
\end{theorem}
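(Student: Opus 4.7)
The plan is to prove Theorem \ref{localeuler} by the standard linearization-plus-fixed-point scheme in critical Besov spaces, following the strategy of \cite{Jia2014, L2viscoelastic} with the low-frequency treatment upgraded from $L^{2}$ to $L^{p}$ as in \cite{NavierStokesLp, NavierStokesLpDanchin}. Setting $a := \rho - 1$ and $F := U - I$, system (\ref{Viscoelastic 2}) rewrites around the constant state as
\begin{align*}
\partial_{t}a + v\cdot\nabla a & = -(1+a)\,\mathrm{div}\, v, \\
\partial_{t}v - \mathcal{A}v & = -v\cdot\nabla v - I(a)\,\mathcal{A}v - K(a)\nabla a + N(a,F), \\
\partial_{t}F + v\cdot\nabla F & = \nabla v\,(I + F),
\end{align*}
where $\mathcal{A}v := \mu\Delta v + (\lambda+\mu)\nabla\mathrm{div}\, v$, $I(a) := a/(1+a)$, $K(a)$ is smooth with $K(0) = P'(1)$, and $N(a,F)$ collects the viscoelastic source $(1+I(a))(I+F)^{jk}\nabla_{j}F^{ik}$. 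The first and third equations are transport equations driven by $v$, and the second is a parabolic Lam\'e system with a variable-coefficient perturbation.

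I would build iterates $(a^{n}, v^{n}, F^{n})$ by Friedrichs truncation of the initial data and then, at each step $n+1$, solving the linear transport equations for $a^{n+1}$ and $F^{n+1}$ driven by $v^{n}$, followed by the linear Lam\'e system for $v^{n+1}$ with right-hand side assembled from $(a^{n}, v^{n}, F^{n})$ and the coefficient $I(a^{n})$. The two linear ingredients are the maximal $L^{1}_{T}$-regularity estimate for the Lam\'e semigroup in $\tilde{L}^{1}_{T}(B^{n/p+1}_{p})$ from \cite{NavierStokesLp}, and the classical transport estimate $\|a\|_{\tilde{L}^{\infty}_{T}(B^{n/p}_{p})} \leq C\|a_{0}\|_{B^{n/p}_{p}} \exp(C\|\nabla v\|_{L^{1}_{T}(B^{n/p}_{p})})$ from \cite{FourierPDE}. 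Combined with the product law $\|fg\|_{B^{n/p}_{p}} \lesssim \|f\|_{B^{n/p}_{p}}\|g\|_{B^{n/p}_{p}}$ and the composition law $\|I(a)\|_{B^{n/p}_{p}} \leq C(\|a\|_{L^{\infty}})\|a\|_{B^{n/p}_{p}}$, these yield uniform bounds on $\|(a^{n}, F^{n})\|_{\tilde{L}^{\infty}_{T}(B^{n/p}_{p})}$ and $\|v^{n}\|_{E_{p}(T)}$ for $T$ small enough depending on the data, provided $\inf\rho_{0} > 0$ so that $I$ remains a smooth function of $a^{n}$ along the iteration.

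The main obstacle will be the variable coefficient $I(a^{n})\mathcal{A}v^{n}$ in the momentum equation: since $I(a^{n})$ has full critical regularity, it cannot simply be pulled out of the Lam\'e estimate. I would handle this exactly through the hybrid Besov splitting $B_{p,p}^{s,t}$ introduced in Section 2: for high frequencies $q > R_{0}$, the parabolic smoothing of $\mathcal{A}$ dominates and a commutator/product estimate of the type given in Lemma \ref{yinli} allows $I(a^{n})\mathcal{A}v^{n}$ to be absorbed on the left after choosing $R_{0}$ large and $T$ small; for low frequencies the spectral localization turns $\mathcal{A}v^{n}$ into an $L^{p}$ object controlled by $2^{2q}\|\dot\Delta_{q}v^{n}\|_{L^{p}}$ and the bound again closes. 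The extra transport equation for $F$ contributes the source $\nabla v\,(I+F)$, which is lower order in $F$ and is swallowed by the same product law in the transport estimate.

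Convergence follows by showing that $(a^{n+1}-a^{n}, v^{n+1}-v^{n}, F^{n+1}-F^{n})$ is Cauchy in the one-derivative-weaker space $\tilde{L}^{\infty}_{T}(B^{n/p-1}_{p}) \times \bigl(\tilde{L}^{\infty}_{T}(B^{n/p-1}_{p}) \cap L^{1}_{T}(B^{n/p+1}_{p})\bigr) \times \tilde{L}^{\infty}_{T}(B^{n/p-1}_{p})$ via the same estimates applied to the differences. Passing to the limit yields a solution of the stated regularity, and uniqueness is obtained by running the difference estimate on any two solutions sharing the same data. Finally, the time-continuity $\rho - 1, U - I \in C([0,T]; B^{n/p}_{p})$ is a standard consequence of the linear theory applied to the limit, since the nonlinear right-hand sides lie in $L^{1}_{T}(B^{n/p}_{p})$ once the a priori bounds are known.
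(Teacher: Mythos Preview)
Your overall scheme---iterate transport equations for $a$ and $F$ against a parabolic Lam\'e equation for $v$, close uniform bounds in $\tilde L^\infty_T(B^{n/p}_p)\times E_p(T)\times \tilde L^\infty_T(B^{n/p}_p)$, then contract in a one-derivative-weaker space---is exactly what the paper intends; the authors omit the argument and point to \cite{Jia2014} and \cite{L2viscoelastic}, which carry out precisely this fixed-point program.

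The one place where your sketch drifts is the treatment of the variable-coefficient term $I(a^n)\mathcal{A}v^{n+1}$. Lemma~\ref{yinli} is an $L^2$-based commutator estimate for \emph{convection} terms $v\cdot\nabla c$, not for a zeroth-order coefficient multiplying a second-order operator, and the hybrid spaces $B^{s,t}_{p,p}$ with threshold $R_0$ are designed for the \emph{global} coupling of the hyperbolic and parabolic parts, not for the local problem stated here in plain $B^{n/p}_p$. For local existence with possibly large $a_0$, the standard device in \cite{NavierStokesLp,NavierStokesLpDanchin} is to freeze the coefficient at time zero: write the momentum equation as $\partial_t v-(1+a_0)^{-1}\mathcal{A}v=\text{RHS}+(I(a_0)-I(a))\mathcal{A}v$, prove maximal $L^1_T(B^{n/p+1}_p)$ regularity for the Lam\'e operator with the rough but time-independent multiplier $(1+a_0)^{-1}\in B^{n/p}_p$ bounded away from zero (via a paraproduct splitting of the coefficient, not a frequency threshold on the solution), and then absorb the last term using $\|a-a_0\|_{\tilde L^\infty_T(B^{n/p}_p)}\to 0$ as $T\to 0$. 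With that correction in place your outline is complete and matches the references the paper cites.
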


Denote:
\begin{align*}
\mathcal{E}^{s} := \Big\{ (a, u, F) \in & (L^{1}(0,\infty; B_{2,p}^{s_{p}+1,s}) \cap \tilde{L}^{\infty}(0,\infty; B_{2,p}^{s_{p}-1,s}) ) \\
& \times ( L^{1}(0,\infty; B_{2,p}^{s_{p}+1,s+1}) \cap \tilde{L}^{\infty}(0,\infty; B_{2,p}^{s_{p}-1,s-1}) )^{n}    \\
& \times (L^{1}(0,\infty; B_{2,p}^{s_{p}+1,s}) \cap \tilde{L}^{\infty}(0,\infty; B_{2,p}^{s_{p}-1,s}))^{n\times n}\Big\},
\end{align*}
where $s_{p} = s - \frac{n}{p} + \frac{n}{2}$.

The global well posedness of equations (\ref{Viscoelastic 2}) in Besov space framework are as follows.
\begin{theorem}\cite{Jia2014}\label{global}
Let $\bar{\rho} > 0$ be a constant such that $P'(\bar{\rho}) > 0$. Suppose that $n=3$. There exist two positive constants $\alpha_{0}$ and $C$
such that for all $(\rho_{0}, v_{0}, U_{0})$ with $\rho_{0} - \bar{\rho} \in B_{2,p}^{n/2-1,n/p}$, $U_{0} - I \in B_{2,p}^{n/2-1,n/p}$,
$v_{0} \in B_{2,p}^{n/2-1,n/p-1}$, and
\begin{align}\label{AssumptionInGlobal}
\|\rho_{0} - \bar{\rho}\|_{B_{2,p}^{n/2-1,n/p}} + \|v_{0}\|_{B_{2,p}^{n/2-1,n/p-1}} + \|U_{0}-I\|_{B_{2,p}^{n/2-1,n/p}} \leq \alpha_{0},
\end{align}
then if $2 \leq p < n$, system (\ref{Viscoelastic 2}) has a unique global solution $(\rho - \bar{\rho}, v, U-I) \in \mathcal{E}^{n/p}$ with
\begin{align*}
\|(\rho-\bar{\rho}, v ,U-I)\|_{\mathcal{E}^{n/p}} \leq C \Big( & \|\rho_{0} - \bar{\rho}\|_{B_{2,p}^{n/2-1,n/p}} + \|v_{0}\|_{B_{2,p}^{n/2-1,n/p-1}}    \\
& + \|U_{0}-\bar{U}\|_{B_{2,p}^{n/2-1,n/p}} \Big).
\end{align*}
\end{theorem}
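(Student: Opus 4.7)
The plan has two conceptually separate parts that must be executed in a single bootstrap: first establish global existence in the class $C(\mathbb R^+;B_{2,1}^{n/2})\times\cdots$ stated in the theorem, and second extract the decay rate $(1+t)^{-n/4}$. The overall strategy follows the outline announced in the introduction: work on a Littlewood--Paley decomposition, treat low frequencies through a Green's matrix analysis that harvests the additional $\dot B^0_{1,\infty}$ information in the data, and treat high frequencies through an energy method in Besov form on a reformulated system.

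For global existence, I would start by observing that $B_{2,1}^{n/2}\hookrightarrow B_{2,2}^{n/2-1,n/2}$ and $B_{2,1}^{n/2-1}\hookrightarrow B_{2,2}^{n/2-1,n/2-1}$, so the smallness assumption of Theorem \ref{MainTheorem} implies the smallness assumption (\ref{AssumptionInGlobal}) of Theorem \ref{global} with $p=2$. Theorem \ref{global} then furnishes a unique global solution in $\mathcal E^{n/2}$. The next job is to upgrade the regularity to the $\ell^1$--summable class $B_{2,1}^{n/2}$ in the theorem. I would do this by applying $\dot\Delta_q$ to (\ref{Viscoelastic 2}), using Lemma \ref{yinli} together with the special structure of Proposition \ref{specialpro} to bound the commutators and the coupling $\rho U^{jk}\nabla_j U^{ik}$, multiplying by $2^{q n/2}$, and summing over $q$; the smallness from Theorem \ref{global} absorbs the quadratic terms and yields the a priori bound in $\tilde L^\infty_T(B_{2,1}^{n/2})\times\tilde L^\infty_T(B_{2,1}^{n/2-1})\cap L^1_T(\dot B_{2,1}^{n/2+1})\times\tilde L^\infty_T(B_{2,1}^{n/2})$, uniformly in $T$.

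For the decay, I would linearize (\ref{Viscoelastic 2}) around $(\bar\rho,0,I)$ and split the resulting principal matrix into three decoupled subsystems as announced: the acoustic--parabolic block for $(\rho-\bar\rho,\operatorname{div}v,\operatorname{div}(U-I))$, the Stokes--type block for $\operatorname{curl}v$, and the transport/damping block capturing the divergence-free part of $U-I$. For each block I would compute the Fourier Green's matrix, verify on $|\xi|\lesssim 1$ that every component is dominated by a heat kernel $e^{-c|\xi|^2t}$, and use Duhamel to reduce the low-frequency estimate to $L^1\to L^2$ bounds of heat semigroups. Propagating the $\dot B^0_{1,\infty}$ norm of the data through these bounds and through the nonlinearity gives
\begin{align*}
\|\dot\Delta_q(\rho-\bar\rho,v,U-I)(t)\|_{L^2}\lesssim 2^{qn/2}(1+t)^{-n/4},\qquad q\le R_0.
\end{align*}
In parallel, for the high-frequency part I would pass to the effective unknowns used in \cite{L2viscoelastic}, apply $\dot\Delta_q$ with $q>R_0$, and derive via Lemma \ref{yinli} a Lyapunov inequality of the form $\tfrac{d}{dt}\mathcal L_q+c\,2^{2q}\mathcal L_q\lesssim (\text{small})\,\mathcal L_q+\text{remainder}$; this yields exponential decay of the high-frequency part in $B_{2,1}^{n/2-1}$. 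Summing the low- and high-frequency pieces proves (\ref{DecayRate}).

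The hardest step will be closing the propagation of the $\dot B^0_{1,\infty}$ norm of the solution under the nonlinear terms. The coupling $\rho U^{jk}\nabla_j U^{ik}$ and the convective derivatives have a critical scaling for $\dot B^0_{1,\infty}$, so that a naive product estimate loses regularity; one must exploit the null-type cancellations in Proposition \ref{specialpro} and the extra smoothing from the $L^1_T(\dot B_{2,1}^{n/2+1})$ velocity bound of Stage 1, combined with the Bony paraproduct and the low/high splitting at the right threshold $R_0$, to keep the remainder term integrable against the heat kernel and thus compatible with the optimal rate $(1+t)^{-n/4}$.
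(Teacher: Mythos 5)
Your proposal proves the wrong statement. The theorem you were asked to establish is Theorem~\ref{global}: global well-posedness of system~(\ref{Viscoelastic 2}) in the hybrid Besov framework $\mathcal{E}^{n/p}$ for $2\le p<n$, a result the paper quotes verbatim from \cite{Jia2014} and for which the paper itself offers no proof. What you have written is instead a sketch of Theorem~\ref{MainTheorem}, the main decay estimate $(1+t)^{-n/4}$ of this paper: you explicitly invoke Theorem~\ref{global} as a black box (``Theorem~\ref{global} then furnishes a unique global solution in $\mathcal{E}^{n/2}$'') and then outline the low-/high-frequency splitting, the Green's matrix analysis, and the Lyapunov inequality that the paper actually carries out in Sections~3--5. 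Used as a proof of Theorem~\ref{global}, this is circular; used as a proof of Theorem~\ref{MainTheorem}, it is a reasonable account of the paper's strategy but simply not the assigned target.

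A genuine proof of Theorem~\ref{global} would have to be self-contained in the hybrid spaces $B_{2,p}^{n/2-1,n/p}$ with general $p\in[2,n)$, which is a different problem from anything you address. Concretely, one must (i) reformulate the system along the lines of~(\ref{newform})/(\ref{HighFreForm1}) using the compatibility relations in Proposition~\ref{specialpro} so that the deformation tensor acquires a dissipative mechanism; (ii) derive uniform-in-$q$ a priori estimates by performing $L^2$ energy computations on $\dot\Delta_q$ blocks in the low-frequency regime $q\le R_0$ and $L^p$ energy computations (or maximal-regularity estimates for the Lam\'e operator plus transport estimates for $a$ and $F$) in the high-frequency regime $q>R_0$; (iii) control the commutators and the nonlinearity via Lemma~\ref{yinli} and Bony paraproduct estimates adapted to the two-index spaces $B_{2,p}^{s,t}$; and (iv) close the bootstrap by a continuation argument from the local theory (Theorem~\ref{localeuler}). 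None of the $\dot B^0_{1,\infty}$ propagation, Green's-matrix $L^1\to L^2$ heat-kernel bounds, or $(1+t)^{-n/4}$ weights that occupy your proposal belong to this statement; they belong to the decay theorem that consumes it.
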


\begin{remark}\label{PriorAssumption}
Taking $p = 2$ in Theorem \ref{global}, we will get global well-posedness in the critical homogeneous Besov space framework.
Assume
\begin{align*}
\|(\rho_{0} - \bar{\rho}, U - I)\|_{B^{n/2} \cap \dot{B}_{1,\infty}^{0}} + \|v_{0}\|_{B^{n/2-1} \cap \dot{B}_{1,\infty}^{0}} \leq \delta,
\end{align*}
as in our main Theorem \ref{MainTheorem}.
If $\delta > 0$ is taken to be small enough, the above assumption will imply (\ref{AssumptionInGlobal}),
hence, we obtain the results in Theorem \ref{global}.
Particularly, we know
\begin{align}\label{AssumptionForTimeIntegralOfV}
\int_{0}^{\infty} \|v\|_{\dot{B}^{n/2+1}} dt \leq C \delta.
\end{align}
This estimate plays an essential role when we estimate the nonlinear terms.
\end{remark}

\section{Analysis about low frequency part}

In this section, we first decompose the system into three small scale system, then analyze the semigroup carefully for the low frequency.
Without loss of generality, we assume $P'(1) = 1$, $\bar{\rho} = 1$ and set $\nu = \lambda + 2\mu$, $\mathcal{A} = \mu \Delta + (\lambda + \mu)\nabla\mathrm{div}$.
Define
\begin{align*}
& \quad\quad\quad\quad\quad K(a) = \frac{P'(1+a)}{1+a} - 1,   \quad  d = \Lambda^{-1}\mathrm{div}v,       \\
& \quad\quad\quad\quad\quad \Omega = \Lambda^{-1}\mathrm{curl}v \quad \text{with } (\mathrm{curl}v)_{ij} = \partial_{x_{j}}v^{i} - \partial_{x_{i}}v^{j},    \\
& \quad\quad\quad\quad\quad\quad\quad \mathcal{E}_{ij} = \Lambda^{-1}\partial_{x_{i}}\Lambda^{-1}\partial_{x_{j}}(F^{ij}+F^{ji}),       \\
& \mathcal{W} = \Lambda^{-1}\partial_{x_{k}}(F^{lk}\nabla_{l}F^{ij}-F^{lj}\nabla_{l}F^{ik}) - \Lambda^{-1}\partial_{x_{k}}
(F^{lk}\nabla_{l}F^{ji}-F^{li}\nabla_{l}F^{jk}).
\end{align*}
where
\begin{align*}
\Lambda^{s}f = \mathcal{F}^{-1}(|\xi|^{s}\hat{f}) \quad \text{for  } s \in \mathbb{R}.
\end{align*}
Performing same procedure as in \cite{XianpengHu}, we will obtain
\begin{align}\label{newform}
\begin{split}
\begin{cases}
\partial_{t}a + \Lambda d = L - v\cdot\nabla a,  \\
\partial_{t}d - \mu \Delta d - 2\Lambda a = G - v\cdot\nabla d,  \\
\partial_{t}\mathcal{E} + 2 \Lambda d = J - v\cdot\nabla \mathcal{E},   \\
\partial_{t}(F^{T} - F) + \Lambda \Omega = I - v\cdot\nabla (F^{T} - F),   \\
\partial_{t}\Omega - \mu \Delta \Omega - \Lambda (F^{T} - F) = H - v\cdot\nabla \Omega,
\end{cases}
\end{split}
\end{align}
where the equation about $d$ have the following equivalent form
\begin{align}\label{equi}
\partial_{t}d - \nu \Delta d - \Lambda \mathcal{E} = K - v\cdot\nabla d,
\end{align}
where
\begin{align*}
& L = - a\mathrm{div}v,    \\
& G = v\cdot\nabla d + \Lambda^{-1}\mathrm{div}(-v\cdot\nabla v + F\nabla F - K(a)\nabla a - \frac{a}{1+a}\mathcal{A}v - \mathrm{div}(aF)),  \\
& H = v\cdot\nabla \Omega + \Lambda^{-1}\mathrm{curl}(-v\cdot\nabla v + F\nabla F - K(a)\nabla a - \frac{a}{1+a}\mathcal{A}v) + \mathcal{W},      \\
& I = (\nabla v F)^{T} - \nabla v F,     \\
& J = - [\Lambda^{-1}\partial_{x_{i}}\Lambda^{-1}\partial_{x_{j}}, v^{k}]\partial_{x_{k}}(F^{ij} + F^{ji})      \\
& \quad\quad + \Lambda^{-1}\partial_{x_{i}}\Lambda^{-1}\partial_{x_{j}}((\nabla v F)^{ij} + (\nabla v F)^{ji}),      \\
& K = v\cdot\nabla d + \Lambda^{-1}\mathrm{div}(-v\cdot\nabla v + F\nabla F - K(a)\nabla a - \frac{a}{1+a}\mathcal{A}v + \mathrm{div}(aF)).
\end{align*}
Here, we denote
\begin{align*}
& M_{1}(t) := \sup_{0 \leq \tau \leq t} (1+\tau)^{n/4}\left( \|a(\tau)\|_{B^{n/2-1,n/2}} + \|d(\tau)\|_{\dot{B}^{n/2-1}} \right),    \\
& M_{2}(t) := \sup_{0 \leq \tau \leq t} (1+\tau)^{n/4}\left( \|\mathcal{E}(\tau)\|_{B^{n/2-1,n/2}} + \|d(\tau)\|_{\dot{B}^{n/2-1}} \right), \\
& M_{3}(t) := \sup_{0 \leq \tau \leq t} (1+\tau)^{n/4}\left( \|(F^{T} - F)(\tau)\|_{B^{n/2-1,n/2}} + \|\Omega(\tau)\|_{\dot{B}^{n/2-1}} \right),    \\
& M_{4}(t) := \sup_{0 \leq \tau \leq t} (1+\tau)^{n/4}\left( \|a\|_{L^{2}} + \|F\|_{L^{2}} + \|v\|_{L^{2}} \right),
\end{align*}
and
\begin{align*}
M(t) := \sup_{0 \leq \tau \leq t} (1+\tau)^{n/4}\left( \|a\|_{B^{n/2}} + \|F\|_{B^{n/2}} + \|v\|_{B^{n/2-1}} \right).
\end{align*}
From the basic properties of Besov space, we easily know
\begin{align*}
M(t) \approx M_{1}(t) + M_{2}(t) + M_{3}(t) + M_{4}(t),
\end{align*}
under the smallness condition of initial data, where we used (5.13) to (5.15) in \cite{Jia2014}.
We here also denote
\begin{align}\label{DefineF1F2}
M_{1} = \left(
         \begin{array}{c}
           L - v\cdot\nabla a \\
           G - v\cdot\nabla s \\
         \end{array}
       \right),
\quad
M_{2} = \left(
       \begin{array}{c}
         J - v\cdot\nabla \mathcal{E} \\
         K - v\cdot\nabla d \\
       \end{array}
     \right),
\end{align}
and
\begin{align}\label{DefineF3}
M_{3} = \left(
       \begin{array}{c}
         I - v\cdot\nabla (F^{T} - F) \\
         H - v\cdot\nabla \Omega \\
       \end{array}
     \right).
\end{align}

Now, we need to introduce the following linearized system with convection terms.
\begin{align}\label{linear}
\begin{split}
\begin{cases}
\partial_{t}a + \Lambda d = L - v\cdot\nabla a,  \\
\partial_{t}d - \mu \Delta d - 2\Lambda a = G - v\cdot\nabla d,  \\
\partial_{t}\mathcal{E} + 2\Lambda d = J - v\cdot\nabla \mathcal{E},   \\
\partial_{t}(F^{T} - F) + \Lambda \Omega = I - v\cdot\nabla (F^{T} - F),   \\
\partial_{t}\Omega - \mu \Delta \Omega - \Lambda (F^{T} - F) = H - v\cdot\nabla \Omega.
\end{cases}
\end{split}
\end{align}
We can decompose the above system into three subsystems.
\begin{align}\label{linear1}
\begin{split}
\begin{cases}
\partial_{t}a + \Lambda d = L - v\cdot\nabla a,  \\
\partial_{t}d - \mu \Delta d - 2\Lambda a = G - v\cdot\nabla d.
\end{cases}
\end{split}
\end{align}
\begin{align}\label{linear2}
\begin{split}
\begin{cases}
\partial_{t}\mathcal{E} + 2 \Lambda d = J - v\cdot\nabla \mathcal{E},   \\
\partial_{t}d - \nu \Delta d - \Lambda \mathcal{E} = K - v\cdot\nabla d.
\end{cases}
\end{split}
\end{align}
\begin{align}\label{linear3}
\begin{split}
\begin{cases}
\partial_{t}(F^{T} - F) + \Lambda \Omega = I - v\cdot\nabla (F^{T} - F),   \\
\partial_{t}\Omega - \mu \Delta \Omega - \Lambda (F^{T} - F) = H - v\cdot\nabla\Omega.
\end{cases}
\end{split}
\end{align}

It is easily observed that the above three system are similar, so we now only study the following linear system.
\begin{align}\label{summary}
\begin{split}
\begin{cases}
\partial_{t}c + \alpha \Lambda u = 0,   \\
\partial_{t}u - \kappa \Delta u - \beta \Lambda c = 0,
\end{cases}
\end{split}
\end{align}
where $c$, $u$ are scalar functions and $\alpha$, $\beta$, $\kappa$ are positive constants.
We first give some important properties of the Green's matrix for the above system.
\begin{lemma}\label{green}
Let $\mathcal{G}$ be the Green matrix of system (\ref{summary}). Then we have the following explicit expression of $\hat{\mathcal{G}}$:
\begin{align*}
\hat{\mathcal{G}}(\xi,t) = \left(
                   \begin{array}{cc}
                     \frac{\lambda_{+}e^{\lambda_{-}t}-\lambda_{-}e^{\lambda_{+}t}}{\lambda_{+}-\lambda_{-}} & - \alpha \left( \frac{e^{\lambda_{+}t}-e^{\lambda_{-}t}}{\lambda_{+}-\lambda_{-}} \right)|\xi| \\
                     -\beta \left(\frac{e^{\lambda_{+}t}-e^{\lambda_{-}t}}{\lambda_{+}-\lambda_{-}}\right)|\xi| & \frac{\lambda_{+}e^{\lambda_{+}t}-\lambda_{-}e^{\lambda_{-}t}}{\lambda_{+}-\lambda_{-}} \\
                   \end{array}
                 \right)
\end{align*}
where
\begin{align*}
\lambda_{\pm} = -\frac{1}{2}\kappa |\xi|^{2} \pm \frac{1}{2} \sqrt{\kappa^{2}|\xi|^{4} - 4\alpha\beta |\xi|^{2}}.
\end{align*}
\end{lemma}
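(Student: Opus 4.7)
The plan is to recast the scalar system as a matrix-valued ODE in Fourier space, then compute the matrix exponential explicitly. Applying $\mathcal{F}$ to (\ref{summary}) and using $\widehat{\Lambda f}(\xi)=|\xi|\hat{f}(\xi)$ and $\widehat{-\Delta f}(\xi)=|\xi|^{2}\hat{f}(\xi)$, the system becomes
\begin{align*}
\partial_{t}\begin{pmatrix}\hat{c}\\ \hat{u}\end{pmatrix}
=A(\xi)\begin{pmatrix}\hat{c}\\ \hat{u}\end{pmatrix},
\qquad
A(\xi)=\begin{pmatrix}0 & -\alpha|\xi|\\ \beta|\xi| & -\kappa|\xi|^{2}\end{pmatrix},
\end{align*}
so by definition the Fourier symbol of the Green matrix is $\hat{\mathcal{G}}(\xi,t)=e^{tA(\xi)}$. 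The task reduces to diagonalising $A(\xi)$ and writing out this exponential.

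Next I would compute the characteristic polynomial of $A(\xi)$, which is
$\lambda^{2}+\kappa|\xi|^{2}\lambda+\alpha\beta|\xi|^{2}=0$,
yielding exactly the two roots $\lambda_{\pm}$ stated in the lemma. For frequencies where $\lambda_{+}\neq\lambda_{-}$ (which holds generically, with the borderline case handled by continuity), I would invoke the standard Sylvester-type formula for the exponential of a $2\times2$ matrix with simple spectrum,
\begin{align*}
e^{tA}=\frac{(A-\lambda_{-}I)e^{\lambda_{+}t}-(A-\lambda_{+}I)e^{\lambda_{-}t}}{\lambda_{+}-\lambda_{-}}
=\frac{\lambda_{+}e^{\lambda_{-}t}-\lambda_{-}e^{\lambda_{+}t}}{\lambda_{+}-\lambda_{-}}\,I+\frac{e^{\lambda_{+}t}-e^{\lambda_{-}t}}{\lambda_{+}-\lambda_{-}}\,A.
\end{align*}
Substituting the explicit entries of $A(\xi)$ into this formula gives the $(1,1)$, $(1,2)$ and $(2,1)$ entries of $\hat{\mathcal{G}}$ immediately. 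For the $(2,2)$ entry, the only mild manipulation needed is to use $\lambda_{+}+\lambda_{-}=-\kappa|\xi|^{2}$ (Vieta), so that
\begin{align*}
\frac{\lambda_{+}e^{\lambda_{-}t}-\lambda_{-}e^{\lambda_{+}t}}{\lambda_{+}-\lambda_{-}}-\kappa|\xi|^{2}\,\frac{e^{\lambda_{+}t}-e^{\lambda_{-}t}}{\lambda_{+}-\lambda_{-}}
=\frac{\lambda_{+}e^{\lambda_{+}t}-\lambda_{-}e^{\lambda_{-}t}}{\lambda_{+}-\lambda_{-}},
\end{align*}
which matches the displayed matrix.

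The computation is essentially algebraic, so there is no genuine obstacle beyond bookkeeping; the one point worth a brief sanity check is the verification at $t=0$ (where the formula collapses to the identity) and the fact that the apparent singularity at $\lambda_{+}=\lambda_{-}$ (which occurs on the sphere $\kappa^{2}|\xi|^{2}=4\alpha\beta$) is removable, since each entry is a symmetric analytic function of $\lambda_{\pm}$ and thus continuous across the degeneracy locus. With these remarks the explicit formula for $\hat{\mathcal{G}}$ follows.
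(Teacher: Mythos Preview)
Your argument is correct and complete: Fourier-transforming to the $2\times2$ ODE $\partial_t\hat V=A(\xi)\hat V$, computing the eigenvalues, and applying the Sylvester interpolation formula is exactly the standard route, and your handling of the $(2,2)$ entry via Vieta and of the degeneracy $\lambda_+=\lambda_-$ by analytic continuation is clean. The paper itself does not give a proof at all---it simply cites Lemma~3.1 and Theorem~3.2 of Hoff--Zumbrun \cite{Hoff1995} and omits the argument---so your explicit computation is in fact more than the paper provides.

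One small remark: substituting $A_{21}=\beta|\xi|$ into your Sylvester formula yields $+\beta\bigl(\tfrac{e^{\lambda_+t}-e^{\lambda_-t}}{\lambda_+-\lambda_-}\bigr)|\xi|$ for the $(2,1)$ entry, whereas the displayed matrix in the lemma has $-\beta$. This is a sign typo in the statement rather than a flaw in your derivation; it does not affect the subsequent estimates (Lemma~\ref{estimateG} and Lemma~\ref{KeyLemmaInLowFrequency}), which only use the bound $|\hat{\mathcal G}|\lesssim e^{-\theta|\xi|^2 t}$.
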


\begin{lemma}\label{estimateG}
Given $R > 0$, there is a positive number $\mathcal{\theta}$ depending on $R$ such that, for any multi-indices $\gamma$ and $|\xi| \leq R$,
\begin{align*}
|D^{\gamma}_{\xi}\hat{\mathcal{G}}(\xi,t)| \leq C e^{-\mathcal{\theta}|\xi|^{2}t}(1+|\xi|)^{|\gamma|}(1+t)^{|\gamma|}
\end{align*}
where $C = C(R, |\gamma|)$.
\end{lemma}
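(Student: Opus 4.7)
The plan is to work directly from the explicit form in Lemma \ref{green} by recognizing that every entry of $\hat{\mathcal{G}}(\xi,t)$ is a symmetric function of the two roots $\lambda_\pm$ of the characteristic equation $\lambda^2 + \kappa|\xi|^2\lambda + \alpha\beta|\xi|^2 = 0$. Writing $a = -\tfrac{\kappa}{2}|\xi|^2$ and $b^2 = \tfrac{\kappa^2}{4}|\xi|^4 - \alpha\beta|\xi|^2$, so that $\lambda_\pm = a \pm b$, the entries take the symmetric form
\[
\frac{e^{\lambda_+ t} - e^{\lambda_- t}}{\lambda_+ - \lambda_-} = e^{at}\,\frac{\sinh(bt)}{b},\qquad
\frac{\lambda_\pm e^{\lambda_\mp t} - \lambda_\mp e^{\lambda_\pm t}}{\lambda_+ - \lambda_-} = e^{at}\Bigl(\cosh(bt)\mp a\,\frac{\sinh(bt)}{b}\Bigr),
\]
multiplied by $|\xi|$ for the off-diagonal entries. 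Since $\cosh(bt)$ and $\sinh(bt)/b$ are entire functions of $b^2 t^2$, while $a$ and $b^2$ are polynomials in $|\xi|^2$, this representation is globally smooth in $\xi$; in particular the formal $(\lambda_+-\lambda_-)^{-1}$ singularity at the transition radius $|\xi| = 2\sqrt{\alpha\beta}/\kappa$ in the naive factorization of Lemma \ref{green} is removable.

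The first substantive step is the uniform exponential decay $\mathrm{Re}(\lambda_\pm) \leq -\theta|\xi|^2$ for $|\xi|\leq R$, with $\theta = \theta(R) > 0$. On the sub-critical disk $|\xi|\leq 2\sqrt{\alpha\beta}/\kappa$ the discriminant is non-positive, so $\lambda_\pm$ are complex conjugates with $\mathrm{Re}(\lambda_\pm) = -\tfrac{\kappa}{2}|\xi|^2$ and $\theta = \kappa/2$ is valid. On the complementary annulus both roots are real and negative; applying the elementary inequality $1 - \sqrt{1-x} \geq x/2$ on $[0,1]$ with $x = 4\alpha\beta/(\kappa^2|\xi|^2) \in (0,1]$ yields $\lambda_+ \leq -\alpha\beta/\kappa \leq -(\alpha\beta/(\kappa R^2))|\xi|^2$, and $\lambda_- \leq \lambda_+$. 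Taking $\theta = \min\{\kappa/2,\,\alpha\beta/(\kappa R^2)\}$ therefore works uniformly, and each building block $e^{at}\cosh(bt)$, $e^{at}\sinh(bt)/b$ inherits pointwise decay controlled by $e^{-\theta|\xi|^2 t}$ via $\tfrac{1}{2}(e^{\lambda_+ t}\pm e^{\lambda_- t})$ and the standard estimate $|e^{x}-e^{y}| \leq |x-y|\,e^{\max(\mathrm{Re}\,x,\mathrm{Re}\,y)}$.

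The second step is the derivative estimate. Differentiating the smooth symmetric representations above, each $\partial_{\xi_j}$ acts either on the polynomial coefficients $a$, $b^2$, whose derivatives are uniformly bounded on $|\xi|\leq R$, or on the hyperbolic and exponential building blocks, where it produces a new factor of the form $t\,\partial_{\xi_j}a$ or $t^2\,\partial_{\xi_j}b^2$ times a smooth symmetric function of $b^2 t^2$. Combining these contributions through Faà di Bruno's formula and inducting on $|\gamma|$, each derivative contributes a bounded polynomial in $\xi$ together with an extra polynomial factor in $t$, while the exponential decay $e^{-\theta|\xi|^2 t}$ (with possibly slightly smaller $\theta$) survives intact. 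The factor $(1+|\xi|)^{|\gamma|}$ in the target bound is then absorbed into $C(R,|\gamma|)$ on $|\xi|\leq R$, yielding the claimed estimate.

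I expect the main obstacle to be the bookkeeping across the transition circle $|\xi| = 2\sqrt{\alpha\beta}/\kappa$, where $\lambda_+ = \lambda_-$ and the naive factorizations of Lemma \ref{green} display a spurious $(\lambda_+-\lambda_-)^{-1}$ singularity that would ruin the pointwise differentiation. Systematically rewriting every entry via the symmetric $(a,b^2)$-representation dissolves this obstacle, because the dependence on $\xi$ then enters only through the polynomials $a$ and $b^2$. Once this replacement is carried out, the remaining calculations are routine polynomial manipulations bounded uniformly on the compact set $\{|\xi|\leq R\}$.
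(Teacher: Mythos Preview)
Your argument is correct. The paper itself does not give a proof of this lemma; it simply remarks that the statement follows from Lemma~3.1 and Theorem~3.2 of Hoff--Zumbrun \cite{Hoff1995} and omits the details. Your sketch is precisely the Hoff--Zumbrun computation: pass to the symmetric $(a,b^2)$ variables so that $e^{at}\cosh(bt)$ and $e^{at}\sinh(bt)/b$ are entire in $b^2t^2$, remove the spurious $(\lambda_+-\lambda_-)^{-1}$ singularity at the transition radius, extract the uniform bound $\mathrm{Re}\,\lambda_\pm \le -\theta|\xi|^2$ on $|\xi|\le R$ by the case split on the sign of the discriminant, and then differentiate, picking up polynomial factors in $t$ at each step. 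So you have supplied exactly the argument the paper defers to the reference, and there is no substantive difference in approach.
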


The proof of the above two Lemmas follows from Lemma 3.1 and Theorem 3.2 in \cite{Hoff1995}, so we omit the proof for simplicity.
Next, we prove an important Lemma which plays the key role in the low frequency analysis.
\begin{lemma}\label{KeyLemmaInLowFrequency}
Let $\mathcal{G}$ be the Green matrix of system (\ref{summary}), dimension $n = 3$. Denote $U_{0} = (c_{0}, u_{0})$, then $\mathcal{G}(t)$ satisfies the estimate
\begin{align*}
\sum_{q \leq R} \|\mathcal{G}(t)\dot{\Delta}_{q} U_{0}\|_{L^{2}} \leq C (1+t)^{-n/4} \|U_{0}\|_{\dot{B}_{1,\infty}^{0}}
\end{align*}
for $t \geq 0$ and $R > 0$ is a large positive constant.
\end{lemma}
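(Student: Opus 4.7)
The plan is to exploit the pointwise Gaussian-type decay of $\hat{\mathcal{G}}(\xi,t)$ on the low-frequency region, combined with the $\dot{B}^{0}_{1,\infty}$ bound which controls the $L^{\infty}$-norm of each dyadic piece of $\hat{U}_{0}$. The whole argument reduces, after Plancherel, to summing a Gaussian-type series in the dyadic parameter.

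First I would fix $q\le R$ and pass to Fourier side. By Plancherel
\begin{equation*}
\|\mathcal{G}(t)\dot{\Delta}_{q}U_{0}\|_{L^{2}}^{2}
= \int_{|\xi|\sim 2^{q}} |\hat{\mathcal{G}}(\xi,t)|^{2}\,|\widehat{\dot{\Delta}_{q}U_{0}}(\xi)|^{2}\,d\xi.
\end{equation*}
Since $q\le R$, the variable $\xi$ lives in the ball $\{|\xi|\le C 2^{R}\}$, where Lemma \ref{estimateG} (with $\gamma=0$) applies and yields $|\hat{\mathcal{G}}(\xi,t)|\le C e^{-\theta|\xi|^{2}t}\le C e^{-\theta' 2^{2q}t}$ on the annulus $|\xi|\sim 2^{q}$. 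Using Hausdorff--Young, $\|\widehat{\dot{\Delta}_{q}U_{0}}\|_{L^{\infty}}\le C\|\dot{\Delta}_{q}U_{0}\|_{L^{1}}\le C\|U_{0}\|_{\dot{B}^{0}_{1,\infty}}$, and the annulus has volume $\sim 2^{nq}$, so
\begin{equation*}
\|\mathcal{G}(t)\dot{\Delta}_{q}U_{0}\|_{L^{2}}
\le C\,2^{nq/2}e^{-\theta' 2^{2q}t}\|U_{0}\|_{\dot{B}^{0}_{1,\infty}}.
\end{equation*}

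Next I would sum over $q\le R$. The task reduces to proving
\begin{equation*}
\Sigma(t):=\sum_{q\le R} 2^{nq/2}e^{-\theta' 2^{2q}t}\le C(1+t)^{-n/4}.
\end{equation*}
For $t\le 1$ the exponential is bounded by $1$ and the geometric sum $\sum_{q\le R}2^{nq/2}$ is finite, giving a uniform constant (which is $\lesssim (1+t)^{-n/4}$ on this range). For $t\ge 1$ I would split at the dyadic threshold $q_{*}$ defined by $2^{2q_{*}}t\approx 1$, i.e.\ $2^{q_{*}}\approx t^{-1/2}$. On $q\le q_{*}$ drop the exponential and use the geometric sum, which gives $\lesssim 2^{nq_{*}/2}=t^{-n/4}$. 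On $q_{*}<q\le R$ the factor $e^{-\theta' 2^{2q}t}$ provides super-geometric decay, so the tail is controlled by its largest term, again $\lesssim t^{-n/4}$. Alternatively, one may view $\Sigma(t)$ as a Riemann-sum version of $\int_{0}^{\infty} r^{n/2-1}e^{-\theta' r^{2}t}\,dr$, whose change of variable $r=s/\sqrt{t}$ yields the decay $t^{-n/4}$ up to the absolutely convergent integral $\int_{0}^{\infty} s^{n/2-1}e^{-\theta' s^{2}}\,ds$ (finite in dimension $n=3$ since $n/2-1=1/2>-1$).

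Combining these two estimates gives the claim. The only genuinely delicate point is the splitting in the dyadic sum: one needs the Gaussian-type weight to dominate the polynomial factor $2^{nq/2}$ uniformly for large $q$ and for all $t\ge 0$, and I would handle this explicitly by the threshold $q_{*}$ argument rather than invoking a continuous substitute, so that the constants depend only on $R$ and $n$. The condition $n=3$ enters merely through integrability of $r^{n/2-1}$ at the origin, which is automatic; the argument actually works for any $n\ge 1$.
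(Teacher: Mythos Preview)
Your proposal is correct and follows essentially the same strategy as the paper: Plancherel plus the pointwise Gaussian bound from Lemma \ref{estimateG}, Hausdorff--Young to replace $\|\widehat{\dot{\Delta}_{q}U_{0}}\|_{L^{\infty}}$ by $\|U_{0}\|_{\dot{B}^{0}_{1,\infty}}$, and then a dyadic summation. The only difference is cosmetic: the paper keeps the full radial integral over each annulus, extracts $t^{-n/4}$ by a change of variables, and is left with the factor $e^{-\frac{2}{9}4^{q}t\theta}(1-e^{-\frac{20}{3}4^{q}t\theta})^{1/2}$, whose sum over $q\le R$ is then shown to be bounded via a three-piece split; you instead bound the Gaussian on the annulus by its value at the inner radius and use the annulus volume, reducing directly to $\sum_{q\le R}2^{nq/2}e^{-\theta'4^{q}t}$, which you handle by the cleaner threshold argument at $q_{*}$. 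Your route is a legitimate simplification of the paper's bookkeeping and yields the same bound.
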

\begin{proof}
By Placherel's theorem and Lemma \ref{estimateG}, we have
\begin{align}\label{SingleKey}
\begin{split}
\|\mathcal{G}(t)\dot{\Delta}_{q}U_{0}\|_{L^{2}} \lesssim &
\left( \int_{\frac{3}{4} 2^{q} < |\xi| < \frac{8}{3} 2^{q}} \left| e^{\hat{\mathcal{G}}(\xi)t} \phi_{q}(\xi) \hat{U}_{0} \right|^{2} d\xi \right)^{1/2} \\
\lesssim & \left( \int_{\frac{3}{4} 2^{q} < |\xi| < \frac{8}{3} 2^{q}} e^{-\theta |\xi|^{2}t} \left| \phi_{q}(\xi) \hat{U}_{0} \right|^{2}  \right)^{1/2}   \\
\lesssim & \|\dot{\Delta}_{q}U_{0}\|_{L^{1}} \left( \int_{\frac{3}{4} 2^{q} < |\xi| < \frac{8}{3} 2^{q}} e^{-\theta |\xi|^{2}t} d\xi \right)^{1/2}  \\
\lesssim & t^{-n/4} \|\dot{\Delta}_{q}U_{0}\|_{L^{1}}
\left( \int_{\sqrt{t}\frac{3}{4} 2^{q} < |\xi| < \sqrt{t}\frac{8}{3} 2^{q}} r^{\frac{n-1}{2}}e^{-\theta r^{2}} dr \right)^{1/2} \\
\lesssim & t^{-n/4} \|U_{0}\|_{\dot{B}_{1,\infty}^{0}} e^{-\frac{2}{9}4^{q}t\theta}\left( 1- e^{-\frac{20}{3}4^{q}t\theta} \right)^{1/2}
\end{split}
\end{align}
Now we do some calculations to bound $\sum_{q \leq R} e^{-\frac{2}{9}4^{q}t\theta}\left( 1- e^{-\frac{20}{3}4^{q}t\theta} \right)^{1/2}$.
Let $k = -q$, then we have
\begin{align*}
\sum_{q \leq R} e^{-\frac{2}{9}4^{q}t\theta}\left( 1- e^{-\frac{20}{3}4^{q}t\theta} \right)^{1/2} = &
\sum_{k = -R}^{\infty} e^{-\frac{2}{9}\left(\frac{1}{4}\right)^{k}t\theta}\left( 1- e^{-\frac{20}{3}\left(\frac{1}{4}\right)^{k}t\theta} \right)^{1/2}  \\
= & I + II + III,
\end{align*}
where
\begin{align*}
I = \sum_{k = -R}^{0} e^{-\frac{2}{9}\left(\frac{1}{4}\right)^{k}t\theta}\left( 1- e^{-\frac{20}{3}\left(\frac{1}{4}\right)^{k}t\theta} \right)^{1/2},
\end{align*}
\begin{align*}
II = \sum_{k = 1}^{\left[ \log_{4}\frac{20}{3}t\theta \right]}
e^{-\frac{2}{9}\left(\frac{1}{4}\right)^{k}t\theta}\left( 1- e^{-\frac{20}{3}\left(\frac{1}{4}\right)^{k}t\theta} \right)^{1/2}
\end{align*}
and
\begin{align*}
III = \sum_{k = \left[ \log_{4}\frac{20}{3}t\theta \right] + 1}^{\infty}
e^{-\frac{2}{9}\left(\frac{1}{4}\right)^{k}t\theta}\left( 1- e^{-\frac{20}{3}\left(\frac{1}{4}\right)^{k}t\theta} \right)^{1/2}.
\end{align*}
For $I$, we have $I \leq R\cdot 1 \leq C < \infty$.
Due to for arbitrary $t > 0$, there exist positive integer $N > 0$ such that $t\theta \leq \frac{3}{20} 4^{N}$.
Without loss of generality, we can choose $t\theta = \frac{3}{20} 4^{N}$.
For $II$, we have
\begin{align*}
II = & \sum_{k = 1}^{N} e^{-\frac{2}{9}\left( \frac{1}{4} \right)^{k} \frac{3}{20} 4^{N}} \left( 1 - e^{-\left( \frac{1}{4} \right)^{k}4^{N}} \right)^{1/2} \\
\leq & \, C \sum_{k = 1}^{N} e^{-\frac{1}{30}\left( \frac{1}{4} \right)^{k} 4^{N}} \leq C \sum_{k = 1}^{N} e^{-\frac{1}{30}4^{-(k-N)}} \\
\leq & \, C \sum_{m = 0}^{N-1} e^{-\frac{1}{30}4^{m}} \leq C < \infty.
\end{align*}
By Taylor's formula, we have
\begin{align*}
1-e^{-\frac{20}{3}\left( \frac{1}{4} \right)^{k}t\theta} = \frac{20}{3}\left( \frac{1}{4} \right)^{k}t\theta
+ \sum_{n = 2}^{\infty} \frac{(-1)^{n+1}}{n!} \left( \frac{20}{3}\left( \frac{1}{4} \right)^{k} t\theta \right)^{n}.
\end{align*}
When $k > \log_{4}\frac{20}{3}t\theta$, we have $\frac{20}{3}\left( \frac{1}{4} \right)^{k}t\theta < 1$.
So by the properties of alternating series, we know
\begin{align}\label{AlternatingSeries}
1 - e^{-\frac{20}{3}\left( \frac{1}{4} \right)^{k}t\theta} \leq \frac{40}{3} \left( \frac{1}{4} \right)^{k}t\theta.
\end{align}
Substituting (\ref{AlternatingSeries}) into $III$, we obtain
\begin{align*}
III \leq & \, C \sum_{k = 1}^{\left[ \log_{4}\frac{20}{3}t\theta \right]} e^{-\frac{2}{9}\left( \frac{1}{4} \right)^{k}t\theta}
\left( \frac{1}{4} \right)^{k/2} \sqrt{t\theta}    \\
\leq & \, C \sqrt{t\theta} e^{-\frac{2}{9}t\theta} \sum_{k = 1}^{\left[ \log_{4}\frac{20}{3}t\theta \right]} \left( \frac{1}{2} \right)^{k}    \\
\leq & \, C < \infty.
\end{align*}
Combining the estimates for $I, II, III$, we finally get
\begin{align}\label{PriorSumKey1}
\sum_{q \leq R} e^{-\frac{2}{9}4^{q}t\theta}\left( 1- e^{-\frac{20}{3}4^{q}t\theta} \right)^{1/2} \leq \, C < \infty,
\end{align}
where $C$ does not depend on $t$.

Summing up estimate (\ref{SingleKey}) with (\ref{PriorSumKey1}), we obtain
\begin{align}\label{SumKey1}
\begin{split}
\sum_{q \leq R} \|\mathcal{G}(t)\dot{\Delta}_{q}U_{0}\|_{L^{2}} \leq & \,
C t^{-n/4} \|U_{0}\|_{\dot{B}_{1,\infty}^{0}} \sum_{q\leq R}e^{-\frac{2}{9}4^{q}t\theta}\left( 1- e^{-\frac{20}{3}4^{q}t\theta} \right)^{1/2}   \\
\leq & \, C t^{-n/4} \|U_{0}\|_{\dot{B}_{1,\infty}^{0}}.
\end{split}
\end{align}

Similarly, we also find that
\begin{align}\label{SumKey2}
\begin{split}
\sum_{q \leq R} \|\mathcal{G}(t)\dot{\Delta}_{q}U_{0}\|_{L^{2}} \leq & \,
C \sum_{q \leq R} \|\dot{\Delta}_{q}U_{0}\|_{L^{1}} \left( \int_{\frac{3}{4}2^{q} \leq r \leq \frac{8}{3}2^{q}} r^{n-1} e^{-\theta r^{2}t} dr \right)^{1/2} \\
\leq & \, C \|U_{0}\|_{\dot{B}_{1,\infty}^{0}} \sum_{q \leq R}(\sqrt{8})^{q} \leq C < \infty.
\end{split}
\end{align}
Combining (\ref{SumKey1}) and (\ref{SumKey2}), we finally arrive our desired results.
\end{proof}

\begin{remark}\label{G1G2G3}
Denote $\mathcal{G}_{1}$, $\mathcal{G}_{2}$ and $\mathcal{G}_{3}$ represent the Green matrix of system (\ref{linear1}),
(\ref{linear2}) and (\ref{linear3}) respectively.
Denote $V_{0}^{1} = (a_{0}, d_{0})$, $V_{0}^{2} = (\mathcal{E}_{0}, d_{0})$ and $V_{0}^{3} = (F_{0}^{T} - F_{0}, \Omega_{0})$, then
using similar methods as in the proof of Lemma \ref{KeyLemmaInLowFrequency}, we will have
\begin{align*}
\sum_{q \leq R} \|\mathcal{G}_{i}(t)\dot{\Delta}_{q} V_{0}^{i}\|_{L^{2}} \leq C (1+t)^{-n/4} \|V_{0}^{i}\|_{\dot{B}_{1,\infty}^{0}}
\end{align*}
for $i = 1, 2, 3$.
\end{remark}

Next, we need to consider estimates about $M_{1}$, $M_{2}$, $M_{3}$ defined in (\ref{DefineF1F2}) and (\ref{DefineF3}).
\begin{lemma}\label{RightHandSide}
There exist an $\delta > 0$ such that if
\begin{align*}
\|a_{0}\|_{B^{n/2}} + \|v_{0}\|_{B^{n/2-1}} + \|F_{0}\|_{B^{n/2}} \leq \delta,
\end{align*}
then there exists a constant $C > 0$ independent of time $T$ such that
\begin{align*}
\|M_{1}, M_{2}, M_{3}\|_{\dot{B}_{1,\infty}^{0}} \leq \,
C (1+t)^{-n/4}M(t)f(t) + C (1+t)^{-n/2}M^2(t)
\end{align*}
for $t \in [0, T]$, where $f(t) = \|v(t)\|_{\dot{B}^{n/2+1}} \in L^{1}(0, \infty)$.
\end{lemma}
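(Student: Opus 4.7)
The strategy is to reduce the $\dot B^{0}_{1,\infty}$ estimate to an $L^{1}$ estimate, using the embedding $\|u\|_{\dot B^{0}_{1,\infty}} \le C\|u\|_{L^{1}}$ (which holds because the convolution kernel of $\dot\Delta_q$ has $L^1$-norm uniformly bounded in $q$), and then bound each nonlinear contribution in $M_{1}, M_{2}, M_{3}$ by H\"older's inequality, combined with the definitions of $M(t)$ and $f(t)$.

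The terms split structurally into three families that I would treat separately. \emph{Pure quadratic expressions in $(a,F)$} -- such as $F\nabla F$, $K(a)\nabla a$, $\operatorname{div}(aF)$, and the analogous $(a,F)$-only parts inside $G$, $H$, $J$, $K$, and $\mathcal W$ -- I estimate by $\|fg\|_{L^{1}}\le\|f\|_{L^{2}}\|g\|_{L^{2}}$, bounding each factor by $C(1+t)^{-n/4}M(t)$ via the embedding $B^{n/2}_{2,1}\hookrightarrow L^{2}\cap\dot H^{1}$ (valid in $n=3$ since $n/2\ge 1$) and the definition of $M(t)$; this produces the $(1+t)^{-n/2}M^{2}(t)$ contribution. \emph{The $v$-involving terms with a derivative on $v$} -- notably $v\cdot\nabla v$, $\frac{a}{1+a}\mathcal{A}v$, the $(\nabla v)\,F$ pieces in $I$ and $J$, and the commutator $[\Lambda^{-1}\partial_{i}\Lambda^{-1}\partial_{j},v^{k}]\partial_{k}F$ in $J$ -- I handle using $\|\nabla v\|_{L^{\infty}}\le C\|v\|_{\dot B^{n/2+1}_{2,1}} = Cf(t)$, which comes from the embedding $\dot B^{n/2+1}_{2,1}\hookrightarrow W^{1,\infty}$. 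For example, $\|v\cdot\nabla v\|_{L^{1}}\le\|v\|_{L^{2}}\|\nabla v\|_{L^{\infty}}\le C(1+t)^{-n/4}M(t)f(t)$ directly gives a term of the first type, while the commutator is tailor-made for Lemma~\ref{yinli}, which transfers the $v$-regularity off of $F$. \emph{The convection pieces} $v\cdot\nabla(\cdot)$ partially cancel inside each subsystem: for instance $L-v\cdot\nabla a=-\operatorname{div}(av)=-\nabla a\cdot v-a\operatorname{div} v$, and similar identities in the other components reduce the convection contributions to bilinears that fall into the two families above.

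Whenever the direct H\"older split yields an intermediate bound of the form $M(t)^{\alpha}f(t)^{\beta}(1+t)^{-\gamma}$ with $\alpha\ne 2$ or $\beta\ne 1$, I close the estimate to the stated form via Young's inequality together with the pointwise smallness $M(t)\le C\delta$ and the time-integrability $\int_{0}^{\infty}f\,dt\le C\delta$ from Remark~\ref{PriorAssumption} and \eqref{AssumptionForTimeIntegralOfV}; this lets any stray positive power of $M$ (or $f$) be absorbed into the stated form.

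The step I expect to be the main obstacle is the treatment of the mixed terms $a\,\operatorname{div} v$ (inside $L$) and $\frac{a}{1+a}\mathcal{A}v$ (inside $G$ and $K$), where the $v$-derivative sits at $L^{2}$ rather than $L^{\infty}$ order, so the naive $\|\nabla v\|_{L^{\infty}}\le Cf(t)$ device does not directly apply. To resolve them I would interpolate between the endpoints controlled by $M(t)$ and $f(t)$, writing in $n=3$
\begin{align*}
\|\operatorname{div} v\|_{L^{2}}\lesssim\|v\|_{\dot B^{n/2-1}_{2,1}}^{3/4}\|v\|_{\dot B^{n/2+1}_{2,1}}^{1/4},\qquad
\|\mathcal{A}v\|_{L^{2}}\lesssim\|v\|_{\dot B^{n/2-1}_{2,1}}^{1/4}\|v\|_{\dot B^{n/2+1}_{2,1}}^{3/4},
\end{align*}
obtained by Besov interpolation at $s=1$ and $s=2$ between $\dot B^{n/2\pm 1}_{2,1}$. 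Multiplying by $\|a\|_{L^{2}}\le C(1+t)^{-n/4}M(t)$ produces a factor $M(t)^{7/4}f(t)^{1/4}(1+t)^{-7n/16}$ or $M(t)^{5/4}f(t)^{3/4}(1+t)^{-5n/16}$; applying Young's inequality at the conjugate pairs $(8/7,8)$ and $(8/5,8/3)$ separates the $M$- and $f$-powers, and the smallness of $\delta$ then lets the resulting pieces be absorbed cleanly into $C(1+t)^{-n/4}M(t)f(t)+C(1+t)^{-n/2}M^{2}(t)$, completing the estimate.
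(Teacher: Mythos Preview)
Your overall strategy---reduce $\dot B^{0}_{1,\infty}$ to $L^{1}$ and bound each bilinear by H\"older---is exactly what the paper does. But your ``second family'' treatment has a genuine gap: the inequality $\|v\cdot\nabla v\|_{L^{1}}\le\|v\|_{L^{2}}\|\nabla v\|_{L^{\infty}}$ is false (H\"older pairs $L^{2}$ with $L^{2}$, or $L^{1}$ with $L^{\infty}$, never $L^{2}$ with $L^{\infty}$, for an $L^{1}$ conclusion), and a scaling argument shows the constant would blow up. Since you only have $\|v\|_{L^{2}}$, $\|a\|_{L^{2}}$, $\|F\|_{L^{2}}$ for the non-differentiated factor, \emph{every} product containing $\nabla v$---not only $a\,\mathrm{div}v$ and $\tfrac{a}{1+a}\mathcal{A}v$---lands in your ``obstacle'' class and must go through the $L^{2}\times L^{2}$ split, leaving you with $\|\nabla v\|_{L^{2}}$ to control.

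Here the paper's device is much simpler than your interpolation--Young detour: it splits $\|\nabla v\|_{L^{2}}$ by frequency,
\[
\|\nabla v\|_{L^{2}}\le\sum_{q\le R}2^{q}\|\dot\Delta_{q}v\|_{L^{2}}+\sum_{q>R}2^{q}\|\dot\Delta_{q}v\|_{L^{2}}\le C_{R}\|v\|_{L^{2}}+\|v\|_{\dot B^{n/2+1}},
\]
the low sum being geometric and the high sum absorbed since $1<n/2+1$. This gives $\|\nabla v\|_{L^{2}}\le C\bigl((1+t)^{-n/4}M(t)+f(t)\bigr)$ directly, and after multiplying by $\|a\|_{L^{2}}$ (or $\|v\|_{L^{2}}$, $\|F\|_{L^{2}}$) the two target terms appear with no further manipulation. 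Your interpolation route also works once the exponents are fixed: $M^{7/4}f^{1/4}(1+t)^{-7n/16}=\bigl[(1+t)^{-n/4}Mf\bigr]^{1/4}\bigl[(1+t)^{-n/2}M^{2}\bigr]^{3/4}$, so the correct Young pair is $(4,4/3)$, not $(8/7,8)$ (with your stated pair the residual term is $f^{2}$, which carries no decay and cannot be absorbed); similarly $(4/3,4)$ for the $\mathcal{A}v$ piece. Finally, the commutator in $J$ is not handled via Lemma~\ref{yinli} in the paper but by writing it as the difference of two terms and using that $\Lambda^{-1}\partial_{i}\Lambda^{-1}\partial_{j}$ is a zero-order multiplier bounded on $\dot B^{0}_{1,\infty}$ and on $L^{2}$.
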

\begin{proof}
Now, we start with $M_{1}$.
For term $v\cdot\nabla a$, we have
\begin{align}\label{F1_1}
\begin{split}
\|v\cdot\nabla a\|_{\dot{B}_{1,\infty}^{0}} \leq & \, C\, \|v\cdot\nabla a\|_{L^{1}} \leq \, C\, \|v\|_{L^{2}} \|\nabla a\|_{L^{2}}   \\
\leq & \, C\, (1+t)^{-n/4} M_{4}(t)\|a\|_{B^{n/2-1,n/2}}    \\
\leq & \, C\, (1+t)^{-n/2}M^{2}(t).
\end{split}
\end{align}
For term $a\mathrm{div}v$, we have
\begin{align}\label{F1_2}
\begin{split}
\|a\mathrm{div}v\|_{\dot{B}_{1,\infty}^{0}} \leq & \, C\, \|a\mathrm{div}v\|_{L^{1}} \leq \, C\, \|a\|_{L^{2}}\|\nabla v\|_{L^{2}}  \\
\leq & \, C\, (1+t)^{-n/4}M_{4}(t) \left( \sum_{q\leq R}\|\dot{\Delta}_{q}\nabla v\|_{L^{2}} + \sum_{q>R}\|\dot{\Delta}_{q}\nabla v\|_{L^{2}} \right)   \\
\leq & \, C\, (1+t)^{-n/4}M_{4}(t) \left( \|v\|_{L^{2}} + \|v\|_{\dot{B}^{n/2+1}} \right)   \\
\leq & \, C\, (1+t)^{-n/2}M_{4}(t) \left( M_{4}(t) + f(t) \right)   \\
\leq & \, C\, (1+t)^{-n/2}M^{2}(t) + C\, (1+t)^{-n/4}M(t)f(t).
\end{split}
\end{align}
For term $v\cdot\nabla d$, we have
\begin{align}\label{F1_3}
\begin{split}
\|v\cdot\nabla d\|_{\dot{B}_{1,\infty}^{0}} \leq & \, C\, \|v\cdot\nabla d\|_{L^{1}} \leq \, C\, \|v\|_{L^{2}} \|\nabla d\|_{L^{2}}   \\
\leq & \, C\, \|v\|_{L^{2}}\left( \|v\|_{L^{2}} + \|v\|_{\dot{B}^{n/2+1}} \right)   \\
\leq & \, C\, (1+t)^{-n/2}M^{2}(t) + C\, (1+t)^{-n/4}M(t)f(t).
\end{split}
\end{align}
For term $\Lambda^{-1}\mathrm{div}(v\cdot\nabla v)$, we have
\begin{align}\label{F1_4}
\begin{split}
\|\Lambda^{-1}\mathrm{div}(v\cdot\nabla v)\|_{\dot{B}_{1,\infty}^{0}} \leq & \, C\, \|v\cdot\nabla v\|_{\dot{B}_{1,\infty}^{0}}
\leq \, C\, \|v\cdot\nabla v\|_{L^{1}}    \\
\leq & \, C\, (1+t)^{-n/2}M^{2}(t) + C\, (1+t)^{-n/4}M(t)f(t),
\end{split}
\end{align}
where we used similar argument in (\ref{F1_3}) to get the last inequality.
For term $\Lambda^{-1}\mathrm{div}(F\nabla F)$, using similar method as in (\ref{F1_4}) and (\ref{F1_1}), we have
\begin{align}\label{F1_5}
\begin{split}
\|\Lambda^{-1}\mathrm{div}(F\nabla F)\|_{\dot{B}_{1,\infty}^{0}} \leq \, C\, (1+t)^{-n/2}M^{2}(t).
\end{split}
\end{align}
Using composition rules (for example: Theorem 2.61 in \cite{FourierPDE}) and similar argument as above, we obtain
\begin{align}\label{F1_6}
\begin{split}
\left\| \Lambda^{-1}\mathrm{div}\left( \frac{a}{1+a}\mathcal{A}v \right) \right\|_{\dot{B}_{1,\infty}^{0}}
\leq & \, C\, (1+t)^{-n/2}M^{2}(t)    \\
& \quad\quad + C\, (1+t)^{-n/4}M(t)f(t).
\end{split}
\end{align}
Summing up estimates (\ref{F1_1}) to (\ref{F1_6}), we get
\begin{align}\label{F1Estimate}
\begin{split}
\|M_{1}\|_{\dot{B}_{1,\infty}^{0}} \leq \, C\, (1+t)^{-n/2}M^{2}(t) + C\, (1+t)^{-n/4}M(t)f(t).
\end{split}
\end{align}
Next, let us due with $F_2$.
The term $v\cdot\nabla \mathcal{E}$, $v\cdot\nabla d$ and $K$ all can be estimated similar to
the term appeared in $F_1$, so we just need to give the following estimates about $J$.
Since
\begin{align}\label{F2_J1}
\begin{split}
& \left\|\Lambda^{-1}\partial_{x_{i}}\Lambda^{-1}\nabla_{x_{j}}\left[ \left( \nabla v F \right)^{ij} + \left( \nabla v F \right)^{ji} \right] \right\|_{\dot{B}_{1,\infty}^{0}} \leq \, C\, \|\nabla v\|_{L^{2}} \|F\|_{L^{2}}    \\
& \quad \leq \, C\, (1+t)^{-n/2}M^{2}(t) + C\, (1+t)^{-n/4}M(t)f(t),
\end{split}
\end{align}
and
\begin{align}\label{F2_J2}
\begin{split}
& \| \left[ \Lambda^{-1}\partial_{x_{i}}\Lambda^{-1}\partial_{x_{j}}, v^{k} \right] \partial_{x_{k}} (F^{ij} + F^{ji}) \|_{\dot{B}_{1,\infty}^{0}}    \\
\leq & \|\Lambda^{-1}\partial_{x_{i}}\Lambda^{-1}\partial_{x_{j}}(v^{k}\partial_{x_{k}}(F^{ij} + F^{ji}))\|_{\dot{B}_{1,\infty}^{0}}    \\
& \quad\quad\quad\quad\quad\quad\quad
+ \|v^{k}\cdot\Lambda^{-1}\partial_{x_{i}}\Lambda^{-1}\partial_{x_{j}}(\partial_{x_{k}}(F^{ij} + F^{ji}))\|_{\dot{B}_{1,\infty}^{0}}    \\
\leq & \, C\, (1+t)^{-n/2}M^{2}(t) + C\, \|v\|_{L^{2}}
\|\Lambda^{-1}\partial_{x_{i}}\Lambda^{-1}\partial_{x_{j}}(\partial_{x_{k}}(F^{ij} + F^{ji}))\|_{\dot{B}_{1,\infty}^{0}}    \\
\leq & \, C\, (1+t)^{-n/2}M^{2}(t) + C\, \|v\|_{L^{2}} \left( \|F\|_{L^{2}} + \|F\|_{B^{n/2-1,n/2}} \right) \\
\leq & C\, (1+t)^{-n/2}M^{2}(t),
\end{split}
\end{align}
we have
\begin{align}\label{F2Estimate}
\begin{split}
\|M_{2}\|_{\dot{B}_{1,\infty}^{0}} \leq \, C\, (1+t)^{-n/2}M^{2}(t) + C\, (1+t)^{-n/4}M(t)f(t).
\end{split}
\end{align}
Due to all terms appeared in $M_{3}$ can be estimated similar to the terms appeared in $M_{1}$ and $M_{2}$, here, we just give the
estimates as follows
\begin{align}\label{F3Estimate}
\begin{split}
\|M_{3}\|_{\dot{B}_{1,\infty}^{0}} \leq \, C\, (1+t)^{-n/2}M^{2}(t) + C\, (1+t)^{-n/4}M(t)f(t).
\end{split}
\end{align}
At this stage, we easily finished the proof by just summing up (\ref{F1Estimate}), (\ref{F2Estimate})
and (\ref{F3Estimate}).
\end{proof}

Denote $V = (a, v, F)$, $V_{1} = (a, d)$, $V_{2} = (\mathcal{E}, d)$, $V_{3} = (F^{T}-F, \Omega)$
and define $\Delta_{R}f := \Delta_{-1}f + \sum_{0\leq q \leq R}\Delta_{q}f$ for a tempered distribution $f$.
Now, we can prove the following proposition which is the main results for low frequency part.
\begin{proposition}\label{LowFrequencyPro}
Let $n = 3$, there exists an $\delta > 0$ such that if
\begin{align*}
\|a_{0}\|_{B^{n/2}} + \|v_{0}\|_{B^{n/2-1}} + \|F_{0}\|_{B^{n/2}} \leq \delta,
\end{align*}
then there exists a constant $C > 0$ independent of time $T$ such that
\begin{align*}
\sup_{0 \leq \tau \leq t}(1+\tau)^{n/4}\|\Delta_{R} V(\tau)\|_{L^{2}} \leq \, C\, \|V_{0}\|_{\dot{B}_{1,\infty}^{0}} + C\, \delta M(t)
+ C\, M^{2}(t)
\end{align*}
for $t \in [0,T]$.
\end{proposition}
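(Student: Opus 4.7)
The plan is to work with each of the three decoupled linearized systems \eqref{linear1}--\eqref{linear3} separately, write Duhamel's formula, apply the low-frequency projection $\Delta_R$, and then patch the estimates together using the algebraic identities that recover $V=(a,v,F)$ from $(V_1,V_2,V_3)$. Concretely, for $i=1,2,3$ I would write
\[
V_i(t) = \mathcal{G}_i(t)V_0^i + \int_0^t \mathcal{G}_i(t-s)\,M_i(s)\,ds,
\]
and since $\mathcal{G}_i$ is a Fourier multiplier it commutes with every $\dot\Delta_q$. Summing $\|\dot\Delta_q V_i(t)\|_{L^2}$ over $q\le R$ and applying Remark \ref{G1G2G3} to both the free-evolution piece and (pointwise in $s$) the Duhamel integrand yields
\[
\sum_{q\le R}\|\dot\Delta_q V_i(t)\|_{L^2} \lesssim (1+t)^{-n/4}\|V_0^i\|_{\dot B_{1,\infty}^0} + \int_0^t (1+t-s)^{-n/4}\|M_i(s)\|_{\dot B_{1,\infty}^0}\,ds,
\]
which, by equivalence of $\|\Delta_R\cdot\|_{L^2}$ with $\sum_{q\le R}\|\dot\Delta_q\cdot\|_{L^2}$ at low frequency (Bernstein), gives the desired $\|\Delta_R V_i(t)\|_{L^2}$ bound.

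Next I would plug in Lemma \ref{RightHandSide}, so that $\|M_i(s)\|_{\dot B_{1,\infty}^0}\le C(1+s)^{-n/4}M(s)f(s) + C(1+s)^{-n/2}M^2(s)$ with $f\in L^1(\mathbb R^+)$ and (by Remark \ref{PriorAssumption}) $\|f\|_{L^1}\lesssim \delta$. With $n=3$ this leaves two time-convolution integrals to bound, namely
\[
I_1(t):=\int_0^t (1+t-s)^{-3/4}(1+s)^{-3/4}f(s)M(s)\,ds,\qquad I_2(t):=\int_0^t (1+t-s)^{-3/4}(1+s)^{-3/2}M^2(s)\,ds.
\]
For $I_2$ I would use the standard convolution estimate (splitting at $t/2$ and noting that $M$ is monotone increasing in its definition), which yields $I_2(t)\le C(1+t)^{-3/4}M^2(t)$. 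For $I_1$, since $M$ is nondecreasing I can pull $M(t)$ out; then the kernel $(1+t-s)^{-3/4}(1+s)^{-3/4}$ times an $L^1$ weight $f$ is bounded, again after splitting at $t/2$, by $C(1+t)^{-3/4}\|f\|_{L^1}\le C\delta(1+t)^{-3/4}$, giving $I_1(t)\le C\delta(1+t)^{-3/4}M(t)$.

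Multiplying by $(1+t)^{n/4}$ and taking the supremum produces the three summands $C\|V_0^i\|_{\dot B_{1,\infty}^0}+C\delta M(t)+CM^2(t)$, so it remains only to reassemble $V=(a,v,F)$ from $V_1=(a,d)$, $V_2=(\mathcal E,d)$ and $V_3=(F^T-F,\Omega)$. The component $a$ appears directly in $V_1$. For $v$, the Helmholtz decomposition $v=-\nabla\Lambda^{-1}d + \Lambda^{-1}\mathrm{div}(\Omega\cdot)$ expresses $v$ through Fourier multipliers of order zero that are $L^2$-bounded on each dyadic block, so $\|\Delta_R v\|_{L^2}\lesssim \|\Delta_R d\|_{L^2}+\|\Delta_R\Omega\|_{L^2}$. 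For $F$, the recovery is subtler because $V_2,V_3$ only see the trace-type combination $\mathcal E$ and the antisymmetric part $F^T-F$; here I invoke the algebraic identities (5.13)--(5.15) of \cite{Jia2014} (already cited in the definition of $M(t)$) to bound the symmetric traceless part of $F$ by these pieces together with terms that can be absorbed into $\delta M(t)$. Combining all three pieces and summing finishes the proof, with the $V_0^1,V_0^2,V_0^3$-norms controlled by $\|V_0\|_{\dot B_{1,\infty}^0}$ since the operators relating them are zero-order Fourier multipliers. The main obstacle I expect is the careful time-convolution estimate for $I_1$, because the weight $f$ has no pointwise decay, only $L^1$ integrability, so the splitting and use of smallness to absorb the $\delta M(t)$ term into the bootstrap later is where the proof is delicate.
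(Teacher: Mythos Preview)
Your approach is essentially the same as the paper's: Duhamel on each subsystem, apply Remark \ref{G1G2G3} and Lemma \ref{RightHandSide}, handle the two time-convolutions by splitting at $t/2$, and reassemble. The only correction needed is in the reconstruction step for $F$: the remainder left over when recovering $F$ from $\mathcal{E}$ and $F^{T}-F$ via the identities (5.13)--(5.15) of \cite{Jia2014} is not a small linear term that can be absorbed into $\delta M(t)$, but a genuinely quadratic expression of the type $\sum_{q\le R}\|\dot\Delta_q\Lambda^{-1}(F\nabla F)\|_{L^2}$, which the paper bounds directly by $C\|F\|_{L^2}\|F\|_{\dot B^{n/2}}\le C(1+t)^{-n/2}M^2(t)$ and therefore places in the $CM^2(t)$ term of the final estimate rather than the $C\delta M(t)$ term.
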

\begin{proof}
By the properties of Littlewood-Paley operator and (5.13), (5.14), (5.15) in \cite{Jia2014}, we have
\begin{align}\label{LowFrePro1}
\begin{split}
\|\Delta_{R}V(\tau)\|_{L^{2}} \leq & \sum_{q \leq R} \|\dot{\Delta}_{q}\Delta_{R}V(\tau)\|_{L^{2}} \lesssim \sum_{q \leq R} \|\dot{\Delta}_{q}V(\tau)\|_{L^{2}}   \\
\lesssim & \, \sum_{q \leq R}\|\dot{\Delta}_{q}V_{1}(\tau)\|_{L^{2}}
+ \sum_{q \leq R}\|\dot{\Delta}_{q}V_{2}(\tau)\|_{L^{2}}    \\
& \, + \sum_{q \leq R}\|\dot{\Delta}_{q}V_{3}(\tau)\|_{L^{2}} + \sum_{q \leq R}\|\dot{\Delta}_{q}\Lambda^{-1}(F\nabla F)(\tau)\|_{L^{2}}
\end{split}
\end{align}
For the last term appeared in the above inequality (\ref{LowFrePro1}), we have
\begin{align}\label{LowFrePro2}
\begin{split}
\sum_{q \leq R}\|\dot{\Delta}_{q}\Lambda^{-1}(F\nabla F)(\tau)\|_{L^{2}} \leq & \, C\, \|F\|_{\dot{B}_{2,2}^{0}} \|F\|_{\dot{B}^{n/2}}   \\
\leq & \, C\, (1+t)^{-n/2}M^{2}(t),
\end{split}
\end{align}
where we used Lemma A.4 in \cite{Jia2014}(take $\tilde{t} = s = 0$, $\tilde{s} = t = \frac{1}{2}$, $p = 2$ and $\gamma = 0$).
From (\ref{newform}) and (\ref{equi}), we easily get
\begin{align}\label{IntegralForm}
\begin{split}
& \dot{\Delta}_{q}V_{1}(t) = \mathcal{G}_{1}(t)\dot{\Delta}_{q}V_{10} + \int_{0}^{t} \mathcal{G}_{1}(t-s)\dot{\Delta}_{q}M_{1}(s) ds,    \\
& \dot{\Delta}_{q}V_{2}(t) = \mathcal{G}_{2}(t)\dot{\Delta}_{q}V_{20} + \int_{0}^{t} \mathcal{G}_{2}(t-s)\dot{\Delta}_{q}M_{2}(s) ds,   \\
& \dot{\Delta}_{q}V_{3}(t) = \mathcal{G}_{3}(t)\dot{\Delta}_{q}V_{30} + \int_{0}^{t} \mathcal{G}_{3}(t-s)\dot{\Delta}_{q}M_{3}(s) ds.
\end{split}
\end{align}
By using Remark \ref{G1G2G3} and Lemma \ref{RightHandSide}, we will get
\begin{align}\label{LowFrePro3}
\begin{split}
\|\Delta_{R}V(\tau)\|_{L^{2}} \lesssim & \, \sum_{q \leq R}\|\dot{\Delta}_{q}V_{1}(\tau)\|_{L^{2}} +
\sum_{q \leq R}\|\dot{\Delta}_{q}V_{2}(\tau)\|_{L^{2}}  \\
& \, + \sum_{q \leq R}\|\dot{\Delta}_{q}V_{3}(\tau)\|_{L^{2}} + (1+\tau)^{-n/2}M^{2}(\tau)  \\
\lesssim & \, I + II + (1+\tau)^{-n/2}M^{2}(\tau),
\end{split}
\end{align}
where
\begin{align}\label{LowFrePro4}
\begin{split}
I = & \sum_{q \leq R}\left\{ \|\mathcal{G}_{1}(\tau)\dot{\Delta}_{q}V_{1}(\tau)\|_{L^{2}}
+ \|\mathcal{G}_{2}(\tau)\dot{\Delta}_{q}V_{2}(\tau)\|_{L^{2}} + \|\mathcal{G}_{3}(\tau)\dot{\Delta}_{q}V_{3}(\tau)\|_{L^{2}} \right\}  \\
\leq & \, C\, (1+\tau)^{-n/4} \left\{ \|V_{10}\|_{\dot{B}_{1,\infty}^{0}} + \|V_{20}\|_{\dot{B}_{1,\infty}^{0}}
+ \|V_{30}\|_{\dot{B}_{1,\infty}^{0}} \right\}  \\
\leq & \, C\, (1+\tau)^{-n/4} \|V_{0}\|_{\dot{B}_{1,\infty}^{0}},
\end{split}
\end{align}
and
\begin{align}\label{LowFrePro5}
\begin{split}
II = & \sum_{q \leq R}\int_{0}^{\tau}\Big\{ \|\mathcal{G}_{1}(\tau - s)\dot{\Delta}_{q}V_{1}(s)\|_{L^{2}}
+ \|\mathcal{G}_{2}(\tau - s)\dot{\Delta}_{q}V_{2}(s)\|_{L^{2}}     \\
& \quad\quad\quad\quad\,
+ \|\mathcal{G}_{3}(\tau - s)\dot{\Delta}_{q}V_{3}(s)\|_{L^{2}} \Big\}ds    \\
\leq & \, C\, \int_{0}^{\tau} (1+\tau - s)^{-n/4} \left\{ \|V_{1}\|_{\dot{B}_{1,\infty}^{0}} + \|V_{2}\|_{\dot{B}_{1,\infty}^{0}}
+ \|V_{3}\|_{\dot{B}_{1,\infty}^{0}} \right\} ds    \\
\leq & \, C\, (1+\tau)^{-n/4}M(t)\int_{0}^{\tau}f(s)ds + C\, (1+\tau)^{-n/2}M^{2}(\tau) \\
\leq & \, C\, (1+\tau)^{-n/4}M(t)\delta + C\, (1+\tau)^{-n/2}M^{2}(\tau).
\end{split}
\end{align}
Combining (\ref{LowFrePro3}), (\ref{LowFrePro4}) and (\ref{LowFrePro5}), we finally obtain
\begin{align}\label{LowFrePro6}
\begin{split}
\sup_{0 \leq \tau \leq t}(1+\tau)^{n/4}\|\Delta_{R}V(\tau)\|_{L^{2}} \leq \, C\, \|V_{0}\|_{\dot{B}_{1,\infty}^{0}}
+ C\, \delta M(t) + C \, M^{2}(t).
\end{split}
\end{align}
\end{proof}

\section{Analysis about high frequency part}

In this part, we need to transform the equation into another form and estimate in the high frequency domain
which is completely different to the method used for the low frequency domain.

Without loss of generality, assume $\bar{\rho} = 1$ and $\gamma = \sqrt{P'(\bar{\rho})} - 1$.
Denote $a = \rho - 1$, $F = U - I$, $\Lambda = (-\Delta)^{1/2}$, $d = \Lambda^{-1}\mathrm{div}v$, $e^{ij} = \Lambda^{-1}\nabla_{j}v^{i}$.
From $U^{\ell k}\nabla_{\ell}U^{ij} - U^{\ell j}\nabla_{\ell}U^{ik} = 0$, we easily know
\begin{align*}
\Lambda^{-1}(\nabla_{j}\nabla_{k}F^{ik}) =
-\Lambda F^{ij} - \Lambda^{-1}\nabla_{k} \left( F^{\ell j}\nabla_{\ell}F^{ik} - F^{\ell k}\nabla_{\ell}F^{ij} \right).
\end{align*}
Hence, we can transform the equations (\ref{Viscoelastic 2}) into the following new form.
\begin{align}\label{HighFreForm1}
\begin{split}
& \partial_{t}a + v\cdot\nabla a + \Lambda d = G_{1},   \\
& \partial_{t}e^{ij} + v\cdot\nabla e^{ij} - \mu \Delta e^{ij} - (\lambda + \mu)\nabla_{i}\nabla_{j}d    \\
& \quad\quad\quad\quad\quad\quad\quad\quad\quad\quad\quad\quad
+ \Lambda^{-1}\nabla_{i}\nabla_{j}a + \Lambda F^{ij} = G_{2}^{ij}   \\
& \partial_{t}F^{ij} + v\cdot\nabla F^{ij} - \Lambda e^{ij} = G_{3}^{ij},
\end{split}
\end{align}
where
\begin{align*}
G_{1} = a\,\mathrm{div}v, \quad G_{3}^{ij} = \nabla_{k}v^{i}\, F^{kj},
\end{align*}
and
\begin{align*}
G_{2}^{ij} = & v\cdot\nabla e^{ij} - \Lambda^{-1}\nabla_{j}\Big[ v\cdot\nabla v^{i} + C(a)\mathcal{A}v + F^{jk}\nabla_{j}F^{ik} \Big] \\
& + \Lambda^{-1}\nabla_{k}(F^{\ell j}\nabla F^{ik} - F^{\ell k}\nabla_{\ell}F^{ij})
\end{align*}
with $C(a) = \frac{a}{1+a}$, $K(a) = \frac{P'(1+a)}{1+a} - 1$.
Moreover, we have
\begin{align}\label{HighFreForm2}
\begin{split}
\nabla_{i}F^{ij} = -\nabla_{j}a + G_{0}^{j}, \quad G_{0}^{j} = - \nabla_{i}(a F^{ij}).
\end{split}
\end{align}

Now, we give the main estimates for high frequency domain in the following proposition.
\begin{proposition}\label{HighMainPro}
There exists an $\delta > 0$ such that if
\begin{align*}
\|a_{0}\|_{B^{n/2}} + \|v_{0}\|_{B^{n/2-1}} + \|F_{0}\|_{B^{n/2}} \leq \delta,
\end{align*}
then there holds
\begin{align*}
\frac{d}{dt}E_{q}(t) + c_{0}E_{q}(t) \leq & \, C\, \Big\{ \alpha_{q}(1+t)^{-n/4}M(t)f(t)      \\
& \quad\quad
+ \alpha_{q}\|G_{1}, G_{3}\|_{B^{n/2-1,n/2}} + \alpha_{q} \|G_{0}, G_{2}\|_{\dot{B}^{n/2-1}} \Big\}
\end{align*}
for $t \in [0,T]$ and $q \geq R$, where $\sum_{q\geq 1}\alpha_{q} \leq 1$,
\begin{align*}
\int_{0}^{\infty}f(t)dt = \int_{0}^{\infty}\|v(t)\|_{\dot{B}^{n/2+1}}dt \leq C\delta
\end{align*}
and $c_{0}$ dose not depend on $q$.
Here, $E_{q}(t)$ is equivalent to
$2^{\frac{n}{2}q} \|\dot{\Delta}_{q}a\|_{L^{2}} + 2^{\frac{n}{2}q} \|\dot{\Delta}_{q}F\|_{L^{2}} + 2^{\left(\frac{n}{2}-1\right)q} \|\dot{\Delta}_{q}e\|_{L^{2}}$.
That is, there exists a $D_{1}$ such that
\begin{align*}
\frac{1}{D_{1}}\tilde{E}_{q}
\leq E_{q} \leq
D_{1} \tilde{E}_{q}
\end{align*}
where
\begin{align*}
\tilde{E}_{q} = 2^{\frac{n}{2}q} \|\dot{\Delta}_{q}a, \dot{\Delta}_{q}F\|_{L^{2}} + 2^{\left(\frac{n}{2}-1\right)q} \|\dot{\Delta}_{q}e\|_{L^{2}}
\end{align*}
\end{proposition}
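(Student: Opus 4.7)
The plan is to localize system (\ref{HighFreForm1}) by applying $\dot{\Delta}_q$, perform $L^{2}$-energy estimates on each component, and assemble a Lyapunov-type functional $E_q$ whose cross terms furnish the missing dissipation on $a$ and $F$ (the viscosity only directly damps $e$). First I would apply $\dot{\Delta}_q$ to the three equations in (\ref{HighFreForm1}) and pair them in $L^{2}$ with $\dot{\Delta}_q a$, $\dot{\Delta}_q e^{ij}$, $\dot{\Delta}_q F^{ij}$ respectively. After summing with weights $2^{nq}$, $2^{(n-2)q}$, $2^{nq}$, the viscous term produces the dissipation $\mu\, 2^{(n-2)q}\|\nabla\dot{\Delta}_q e\|_{L^{2}}^{2}\gtrsim \mu\,2^{nq}\|\dot{\Delta}_q e\|_{L^{2}}^{2}$ in the high-frequency regime, while the skew pair $(\Lambda\dot{\Delta}_q e,\dot{\Delta}_q F)$ cancels against the corresponding contribution from the $F$-equation. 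This yields a basic identity of the form $\tfrac{1}{2}\tfrac{d}{dt}\tilde{E}_q^{2}+\mu\,2^{nq}\|\dot{\Delta}_q e\|_{L^{2}}^{2}\leq\mathrm{coupling}+\mathrm{RHS}$.

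Next, to recover dissipation on $a$ and $F$, I would add Hoff--Danchin-type cross terms such as $\kappa\, 2^{(n-1)q}(\dot{\Delta}_q e^{ij},\Lambda^{-1}\nabla_i\nabla_j \dot{\Delta}_q a)$ and an analogous $(\dot{\Delta}_q e,\Lambda \dot{\Delta}_q F)$ correction. Differentiating in time and using the $\Lambda^{-1}\nabla_i\nabla_j a$ and $\Lambda F^{ij}$ terms in the $e$-equation produces quadratic quantities $2^{nq}\|\dot{\Delta}_q(a,F)\|_{L^{2}}^{2}$, and the constraint (\ref{HighFreForm2}) ($\nabla_i F^{ij}=-\nabla_j a + G_0^j$) lets us transfer between $\Lambda a$ and the divergence of $F$. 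Choosing $\kappa$ small and the cutoff $R$ large enough, these new contributions are coercive and dominate the cross terms $\partial_t e$ generates against the viscous dissipation (handled by Young's inequality). The resulting modified quantity $E_q$ is equivalent to $\tilde{E}_q$ uniformly in $q\geq R$, and its derivative absorbs $c_0 E_q$ on the left-hand side.

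For the right-hand side, the convection terms $v\cdot\nabla a$, $v\cdot\nabla e$, $v\cdot\nabla F$ are exactly the objects handled by Lemma \ref{yinli}, producing bounds of the form $C\alpha_q\,\|v\|_{\dot{B}^{n/2+1}}\,\|(a,F)\|_{B^{n/2-1,n/2}}\|\dot{\Delta}_q(\cdot)\|_{L^{2}}$. Using the a priori decay built into $M(t)$, namely $\|(a,F)\|_{B^{n/2-1,n/2}}\lesssim (1+t)^{-n/4}M(t)$, this term rearranges into $\alpha_q(1+t)^{-n/4}M(t)f(t)\,E_q$ with $f(t)=\|v\|_{\dot{B}^{n/2+1}}$, which is $L^{1}$ in time by (\ref{AssumptionForTimeIntegralOfV}). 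The genuinely nonlinear sources $G_1,G_3$ are paired with $\dot{\Delta}_q a,\dot{\Delta}_q F$ and estimated in $B^{n/2-1,n/2}$, while $G_0,G_2$ (which involve one extra derivative through $\Lambda^{-1}\nabla$) are paired with $\dot{\Delta}_q e$ and estimated in $\dot{B}^{n/2-1}$, matching the two norms appearing on the right-hand side of the proposition. Dividing through by $E_q$ (after a standard approximation to avoid division by zero) gives the stated differential inequality.

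The main obstacle is the construction of $E_q$: one must choose the cross-term coefficients so that $E_q$ is both equivalent to $\tilde E_q$ with a $q$-independent constant $D_1$ and yields a uniform dissipation rate $c_0$. This balance depends on the viscosity $\mu$, the cutoff $R$, and the smallness threshold $\delta$ that controls the composition terms (e.g.\ $C(a)\mathcal{A}v$) via Theorem~2.61 of \cite{FourierPDE}; choosing $R$ large enough is precisely what forces the coupling terms to be dominated by the viscous dissipation $\mu\,2^{nq}\|\dot{\Delta}_q e\|_{L^{2}}^{2}$ for all $q\geq R$.
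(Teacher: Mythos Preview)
Your proposal is correct and follows essentially the same strategy as the paper: localize (\ref{HighFreForm1}) by $\dot{\Delta}_q$, build a Lyapunov functional containing $\|\dot{\Delta}_q e\|_{L^2}^2$, $\|\Lambda\dot{\Delta}_q a\|_{L^2}^2$, $\|\Lambda\dot{\Delta}_q F\|_{L^2}^2$ together with the cross terms $(\Lambda\dot{\Delta}_q a\mid\dot{\Delta}_q d)$ and $(\Lambda\dot{\Delta}_q F\mid\dot{\Delta}_q e)$, exploit the constraint (\ref{HighFreForm2}) to convert $(\dot{\Delta}_q F^{ij}\mid\nabla_i\nabla_j\dot{\Delta}_q a)$ into additional dissipation on $a$ (which is exactly where $G_0$ enters in $\dot{B}^{n/2-1}$), and treat the convection terms via Lemma~\ref{yinli}.

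The only noteworthy difference is organizational: you introduce a small parameter $\kappa$ in front of the cross terms and balance it against the viscous dissipation, whereas the paper keeps the cross-term coefficients fixed (they arise with coefficient $\pm 2$ and the physical constants $\mu,\lambda$), works at the level of an unweighted functional $f_q$, and relies \emph{solely} on taking $R$ large to secure both the equivalence $f_q^2\approx 2^{2q}\|\dot{\Delta}_q(a,F)\|_{L^2}^2+\|\dot{\Delta}_q e\|_{L^2}^2$ and the coercivity of $\tilde f_q^2$. The paper also inserts one extra auxiliary identity (pairing $\Lambda^{-1}\nabla_i\nabla_j$ of the $F$-equation with $\Lambda^{-1}\nabla_k\nabla_\ell\dot{\Delta}_q F^{k\ell}$) to cancel a stray $(\lambda+\mu)(\nabla_i\nabla_j\dot{\Delta}_q d\mid\Lambda\dot{\Delta}_q F^{ij})$ term; in your scheme that term would instead be absorbed by Young's inequality and the smallness of $\kappa$. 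Both routes lead to the same differential inequality after setting $E_q=2^{(n/2-1)q}f_q$.
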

\begin{proof}
Applying the operator $\dot{\Delta}_{q}$ to system (\ref{HighFreForm1}), we find that $(a, e, F)$ satisfies
\begin{align}\label{LocalHigFreFrom1}
\begin{split}
& \dot{\Delta}_{q}\partial_{t}a + \Lambda\dot{\Delta}_{q}d = \dot{\Delta}_{q}G_{1} - \dot{\Delta}_{q}(v\cdot\nabla a),    \\
& \dot{\Delta}_{q}\partial_{t}e^{ij} - \mu \Delta\dot{\Delta}_{q}e^{ij} - (\lambda + \mu)\nabla_{i}\nabla_{j}\dot{\Delta}_{q}d
+ \Lambda^{-1}\nabla_{i}\nabla_{j}\dot{\Delta}_{q}a \\
& \quad\quad\quad\quad\quad\quad\quad\quad\quad\quad\quad\quad\quad
+ \Lambda\dot{\Delta}_{q}F^{ij} = \dot{\Delta}_{q}G_{2}^{ij} - \dot{\Delta}_{q}(v\cdot\nabla e^{ij}),       \\
& \dot{\Delta}_{q}\partial_{t}F^{ij} - \Lambda\dot{\Delta}_{q}e^{ij} = \dot{\Delta}_{q}G_{3}^{ij} - \dot{\Delta}_{q}(v\cdot\nabla F^{ij}),
\end{split}
\end{align}
where $i,j = 1, 2, 3$.
Taking the $L^{2}$-product of the second equation of (\ref{LocalHigFreFrom1}) with $\dot{\Delta}_{q}e^{ij}$, then
summing up the resulting equation with respect to indexes $i,j$, we can get
\begin{align}\label{H6.5}
\begin{split}
& \frac{1}{2}\frac{d}{dt}\|\dot{\Delta}_{q}e\|_{L^{2}}^{2} + \mu\|\Lambda\dot{\Delta}_{q}e\|_{L^{2}}^{2} + (\lambda + \mu)\|\Lambda\dot{\Delta}_{q}d\|_{L^{2}}^{2}
- (\dot{\Delta}_{q}a | \Lambda\dot{\Delta}_{q}d)    \\
& \quad\quad\quad\quad\quad\quad\quad\quad
+ (\Lambda\dot{\Delta}_{q}F | \dot{\Delta}_{q}a) = (\dot{\Delta}_{q}G_{2} | \dot{\Delta}_{q}e) - (\dot{\Delta}_{q}(v\cdot\nabla e) | \dot{\Delta}_{q}e),
\end{split}
\end{align}
where we used the fact $d = - \Lambda^{-2}\nabla_{i}\nabla_{j}e^{ij}$.
We apply the operator $\Lambda$ to the first equation of (\ref{LocalHigFreFrom1}) and take the $L^{2}$-product of the
resulting equation with $-\dot{\Delta}_{q}d$, and take the $L^{2}$-product of the second equation of (\ref{LocalHigFreFrom1})
with $\Lambda^{-1}\nabla_{i}\nabla_{j}\dot{\Delta}_{q}a$. Then, summing up the resulting equations yields that
\begin{align}\label{H6.7}
\begin{split}
& -\frac{d}{dt}(\Lambda\dot{\Delta}_{q}a | \dot{\Delta}_{q}d) - \|\Lambda\dot{\Delta}_{q}d\|_{L^{2}}^{2} + \|\Lambda\dot{\Delta}_{q}a\|_{L^{2}}^{2}
- (\lambda + 2\mu) (\Lambda^{2}\dot{\Delta}_{q}d | \Lambda\Delta_{q}a)  \\
& \quad\quad
+ (\dot{\Delta}_{q}F^{ij} | \nabla_{i}\nabla_{j}\dot{\Delta}_{q}a) = - (\Lambda\dot{\Delta}_{q}G_{1} | \dot{\Delta}_{q}d)
+ (\dot{\Delta}_{q}G_{2}^{ij} | \Lambda^{-1}\nabla_{i}\nabla_{j}\dot{\Delta}_{q}a)  \\
& \quad\quad
+ (\Lambda\dot{\Delta}_{q}(v\cdot\nabla a) | \dot{\Delta}_{q}d) - (\dot{\Delta}_{q}(v\cdot\nabla e^{ij}) | \Lambda^{-1}\nabla_{i}\nabla_{j}\dot{\Delta}_{q}a).
\end{split}
\end{align}
We apply the operator $\Lambda$ to the third equation of (\ref{LocalHigFreFrom1}) and take the $L^{2}$-product of
the resulting equation with $\dot{\Delta}_{q}e^{ij}$ and take the $L^{2}$-product of the second equation of (\ref{LocalHigFreFrom1})
with $\Lambda\dot{\Delta}_{q}F^{ij}$. Then, summing up the resulting equations yields that
\begin{align}\label{H6.8}
\begin{split}
& \frac{d}{dt}(\Lambda\dot{\Delta}_{q}F | \dot{\Delta}_{q}e) - \|\Lambda\dot{\Delta}_{q}e\|_{L^{2}}^{2} + \|\Lambda\dot{\Delta}_{q}F\|_{L^{2}}^{2}
+ \mu (\Lambda^{2}\dot{\Delta}_{q}e | \Lambda\dot{\Delta}_{q}F) \\
& \quad\quad\quad
+ (\lambda + \mu)(\nabla_{i}\nabla_{j}\dot{\Delta}_{q}d | \Lambda\dot{\Delta}_{q}F^{ij}) + (\nabla_{i}\nabla_{j}\dot{\Delta}_{q}a | \dot{\Delta}_{q}F^{ij}) \\
& \quad\quad\quad
= (\dot{\Delta}_{q}G_{2} | \Lambda\dot{\Delta}_{q}F) + (\Lambda\dot{\Delta}_{q}G_{3} | \dot{\Delta}_{q}e)
- (\Lambda\dot{\Delta}_{q}(v\cdot\nabla e) | \dot{\Delta}_{q}F)     \\
& \quad\quad\quad
- (\Lambda\dot{\Delta}_{q}(v\cdot\nabla F) | \dot{\Delta}_{q}e).
\end{split}
\end{align}
Now, applying the operator $\Lambda$ to the first and the third equations of (\ref{LocalHigFreFrom1}),
then taking the $L^{2}$ product of the resulting equations with $\Lambda\dot{\Delta}_{q}a$ and $\Lambda\dot{\Delta}_{q}F^{ij}$, we will get
\begin{align}\label{H6.10}
\begin{split}
& \frac{1}{2}\frac{d}{dt}\|\Lambda\dot{\Delta}_{q}a\|_{L^{2}}^{2} + (\Lambda^{2}\dot{\Delta}_{q}d | \Lambda\dot{\Delta}_{q}a) \\
& \quad\quad\quad\quad\quad\quad\quad\quad
= (\Lambda\dot{\Delta}_{q}G_{1} | \Lambda\dot{\Delta}_{q}a) - (\Lambda\dot{\Delta}_{q}(v\cdot\nabla a) | \Lambda\dot{\Delta}_{q}a),
\end{split}
\end{align}
and
\begin{align}\label{H6.11}
\begin{split}
& \frac{1}{2}\frac{d}{dt}\|\Lambda\dot{\Delta}_{q}F\|_{L^{2}}^{2} - (\Lambda^{2}\dot{\Delta}_{q}e | \Lambda\dot{\Delta}_{q}F) \\
& \quad\quad\quad\quad\quad\quad\quad\quad
= (\Lambda\dot{\Delta}_{q}G_{3} | \Lambda\dot{\Delta}_{q}F) - (\dot{\Delta}_{q}(v\cdot\nabla F) | \Lambda\dot{\Delta}_{q}F).
\end{split}
\end{align}
We apply the operator $\Lambda^{-1}\nabla_{i}\nabla_{j}$ to the third equation of (\ref{LocalHigFreFrom1}) and take
the summation with respect to $i,j$, then we take the $L^{2}$ times the resulting equation with $\Lambda^{-1}\nabla_{i}\nabla_{j}\dot{\Delta}_{q}F^{ij}$
to get
\begin{align}\label{H6.12}
\begin{split}
& \frac{1}{2}\frac{d}{dt}\|\Lambda^{-1}\nabla_{i}\nabla_{j}\dot{\Delta}_{q}F^{ij}\|_{L^{2}}^{2} +
(\Lambda\dot{\Delta}_{q}d | \nabla_{i}\nabla_{j}\dot{\Delta}_{q}F^{ij})     \\
& \quad\quad\quad\quad
= (\Lambda^{-1}\nabla_{i}\nabla_{j}\dot{\Delta}_{q}G_{3}^{ij} | \Lambda^{-1}\nabla_{k}\nabla_{\ell}\dot{\Delta}_{q}F^{k,\ell})  \\
& \quad\quad\quad\quad\quad
- (\Lambda^{-1}\nabla_{i}\nabla_{j}\dot{\Delta}_{q}(v\cdot\nabla F^{ij}) | \Lambda^{-1}\nabla_{k}\nabla_{\ell}\dot{\Delta}_{q}F^{k,\ell}).
\end{split}
\end{align}
Summing up (\ref{H6.5}), (\ref{H6.7}), (\ref{H6.8}) and (\ref{H6.10})-(\ref{H6.12}) yields that
\begin{align}\label{H6.13}
\begin{split}
& \frac{1}{2}\frac{d}{dt}f_{q}^{2} + \tilde{f}_{q}^{2} + 2(\dot{\Delta}_{q}F^{ij} | \nabla_{i}\nabla_{j}\dot{\Delta}_{q}a) = (\dot{\Delta}_{q}G_{2} | \dot{\Delta}_{q}e)
- (\Lambda\dot{\Delta}_{q}G_{1} | \dot{\Delta}_{q}d)    \\
& \quad\quad
- (\dot{\Delta}_{q}G_{2}^{ij} | \Lambda^{-1}\nabla_{i}\nabla_{j}\dot{\Delta}_{q}a) + (\dot{\Delta}_{q}G_{2} | \Lambda\dot{\Delta}_{q}F)
+ (\Lambda\dot{\Delta}_{q}G_{3} | \dot{\Delta}_{q}e)    \\
& \quad\quad
+ (\lambda + 2\mu)(\Lambda\dot{\Delta}_{q}G_{1} | \Lambda\dot{\Delta}_{q}a)
+ \mu (\Lambda\dot{\Delta}_{q}G_{3} | \Lambda\dot{\Delta}_{q}F)    \\
& \quad\quad
+ (\lambda + \mu)(\Lambda^{-1}\nabla_{i}\nabla_{j}\dot{\Delta}_{q}G_{3} | \Lambda^{-1}\nabla_{k}\nabla_{\ell}\dot{\Delta}_{q}F^{k\ell})  + F_{q},
\end{split}
\end{align}
where
\begin{align*}
f_{q}^2 = & \|\dot{\Delta}_{q}e\|_{L^{2}}^{2} + (\lambda +2\mu)\|\Lambda\dot{\Delta}_{q}a\|_{L^{2}}^{2} + \mu\|\Lambda\dot{\Delta}_{q}F\|_{L^{2}}^{2} \\
& + (\lambda + \mu)\|\Lambda^{-1}\nabla_{i}\nabla_{j}\dot{\Delta}_{q}F^{ij}\|_{L^{2}}^{2} - 2(\Lambda\dot{\Delta}_{q}a | \dot{\Delta}_{q}d)
+ 2(\Lambda\dot{\Delta}_{q}F | \dot{\Delta}_{q}e),
\end{align*}
\begin{align*}
\tilde{f}_{q}^{2} = & (\mu - 1)\|\Lambda\dot{\Delta}_{q}e\|_{L^{2}}^{2} + (\lambda+\mu-1)\|\Lambda\dot{\Delta}_{q}d\|_{L^{2}}^{2}
+ \|\Lambda\dot{\Delta}_{q}a\|_{L^{2}}^{2} + \|\Lambda\dot{\Delta}_{q}F\|_{L^{2}}^{2}   \\
& - (\dot{\Delta}_{q}a | \Lambda\dot{\Delta}_{q}d) + (\Lambda\dot{\Delta}_{q}F | \dot{\Delta}_{q}e),
\end{align*}
\begin{align*}
F_{q} = & -(\dot{\Delta}_{q}(v\cdot\nabla e) | \dot{\Delta}_{q}e) + \Big( (\Lambda\dot{\Delta}_{q}(v\cdot\nabla a) | \dot{\Delta}_{q}d)
+ (\dot{\Delta}_{q}(v\cdot\nabla e^{ij}) | \Lambda^{-1}\nabla_{i}\nabla_{j}\dot{\Delta}_{q}a) \Big) \\
& + \Big( (\Lambda\dot{\Delta}_{q}(v\cdot\nabla e) | \dot{\Delta}_{q}F) - (\Lambda\dot{\Delta}_{q}(v\cdot\nabla F) | \dot{\Delta}_{q}e) \Big)
- \mu (\Lambda\dot{\Delta}_{q}(v\cdot\nabla F) | \Lambda\dot{\Delta}_{q}F)  \\
& - (\lambda + \mu)(\Lambda^{-1}\nabla_{i}\nabla_{j}\dot{\Delta}_{q}(v\cdot\nabla F) | \Lambda^{-1}\nabla_{k}\nabla_{\ell}\dot{\Delta}_{q}F^{k\ell})    \\
& - (\lambda + 2\mu)^{2}(\Lambda\dot{\Delta}_{q}(v\cdot\nabla a) | \nabla\dot{\Delta}_{q}a)
\end{align*}
Here, we can take $R$ to be a fix large enough constant. For $q > R$, we can easily deduce
\begin{align}\label{dengjia1}
\begin{split}
f_{q}^{2} \approx 2^{2q} \|\dot{\Delta}_{q}a\|_{L^{2}}^{2} + \|\dot{\Delta}_{q}e\|_{L^{2}}^{2} + 2^{2q}\|\dot{\Delta}_{q}F\|_{L^{2}}^{2},
\end{split}
\end{align}
and
\begin{align}\label{dengjia2}
\begin{split}
2^{2q} \|\dot{\Delta}_{q}a\|_{L^{2}}^{2} + \|\dot{\Delta}_{q}e\|_{L^{2}}^{2} + 2^{2q}\|\dot{\Delta}_{q}F\|_{L^{2}}^{2} \lesssim \,\tilde{f}_{q}^{2}.
\end{split}
\end{align}
Using the identity (\ref{HighFreForm2}), we find that
\begin{align}\label{dengjia3}
\begin{split}
(\dot{\Delta}_{q}F^{ij} | \nabla_{i}\nabla_{j}a) = & (\nabla_{i}\nabla_{j}\dot{\Delta}_{q}F^{ij} | \dot{\Delta}_{q}a)   \\
= & \|\Lambda\dot{\Delta}_{q}a\|_{L^{2}}^{2} + (\Lambda\dot{\Delta}_{q}a | \Lambda^{-1}\nabla_{j}G_{0}^{j}).
\end{split}
\end{align}
Let $E_{q}(t) = 2^{\left( \frac{n}{2} - 1 \right)q} f_{q}$, then we have
\begin{align}\label{dengjia4}
\begin{split}
E_{q}(t) \approx 2^{\frac{n}{2}q} \|\dot{\Delta}_{q}a\|_{L^{2}} + 2^{\left(\frac{n}{2}-1\right)q} \|\dot{\Delta}_{q}e\|_{L^{2}}
+ 2^{\frac{n}{2}q} \|\dot{\Delta}_{q}F\|_{L^{2}}
\end{split}
\end{align}
By (\ref{H6.13}), (\ref{dengjia1})-(\ref{dengjia4}) and Lemma \ref{yinli}, we finally obtain
\begin{align*}
\frac{d}{dt}E_{q}(t) + c_{0}E_{q}(t) \leq & \, C \alpha_{q}(1+t)^{-n/4} M(t)f(t) + C \alpha_{0}\|G_{1}, G_{3}\|_{B^{n/2-1,n/2}}   \\
& \, + C \alpha_{q} \|G_{0}, G_{2}\|_{\dot{B}^{n/2-1}}.
\end{align*}
\end{proof}

\section{Derive optimal time decay rate}

With the analysis about low and high frequency part, we now give the proof of Theorem \ref{MainTheorem}.
From Proposition \ref{HighMainPro}, we know that
\begin{align}\label{Decay1}
\begin{split}
E_{q}(t) \leq & e^{-c_{0}t} E_{q}(0) + C \int_{0}^{t} e^{-c_{0}(t-\tau)} \Big( \alpha_{q}(1+\tau)^{-n/4}M(\tau)f(\tau)  \\
& + \alpha_{q}\|G_{1}, G_{3}\|_{B^{n/2-1,n/2}} + \alpha_{q} \|G_{0}, G_{2}\|_{\dot{B}^{n/2-1}} \Big)d\tau.
\end{split}
\end{align}
Through homogeneous para-differential calculus, we can get
\begin{align}\label{DG1}
\begin{split}
\|G_{1}\|_{B^{n/2-1,/2}} \leq & \, C\, \|a\|_{B^{n/2-1,n/2}} \|\mathrm{div}v\|_{\dot{B}^{n/2}}    \\
\leq & \, C\, (1+\tau)^{-n/4} M(\tau) f(\tau),
\end{split}
\end{align}
\begin{align}\label{DG3}
\begin{split}
\|G_{3}\|_{B^{n/2-1,n/2}} \leq & \, C\, \|F\|_{B^{n/2-1,n/2}} \|\nabla v\|_{\dot{B}^{n/2}}    \\
\leq & \, C\, (1+\tau)^{-n/4}M(\tau)f(\tau),
\end{split}
\end{align}
\begin{align}\label{DG0}
\begin{split}
\|G_{0}\|_{\dot{B}^{n/2-1}} \leq & \, C\, \|a F\|_{\dot{B}^{n/2}} \leq \, C\, \|a\|_{\dot{B}^{n/2}} \|F\|_{\dot{B}^{n/2}}   \\
\leq & \, C\, (1+\tau)^{-n/2} M^{2}(\tau).
\end{split}
\end{align}
For the term $G_{2}$, we need to estimates term by term carefully as follows
\begin{align}\label{DG21}
\|v\cdot\nabla e\|_{\dot{B}^{n/2-1}} + \|v\cdot\nabla v\|_{\dot{B}^{n/2-1}} \leq & \, C\, \|v\|_{\dot{B}^{n/2-1}} \|\nabla v\|_{\dot{B}^{n/2}}    \\
\leq & \, C(1+\tau)^{-n/4}M(\tau)f(\tau).
\end{align}
Noting that $C(0) = K(0) = 0$, we get by using Lemma 3 and Remark 6 in \cite{L2viscoelastic} that
\begin{align}\label{DG22}
\|C(a)\mathcal{A}v\|_{\dot{B}^{n/2-1}} \leq \, C\|\nabla^{2}v\|_{\dot{B}^{n/2-1}} \|C(a)\|_{\dot{B}^{n/2}} \leq \, C (1+\tau)^{-n/4}M(\tau)f(\tau),
\end{align}
\begin{align}\label{DG23}
\|K(a)\nabla a\|_{\dot{B}^{n/2-1}} \leq \, C\|K(a)\|_{\dot{B}^{n/2}} \|\nabla a\|_{\dot{B}^{n/2-1}} \leq \, C (1+\tau)^{-n/2}M^{2}(\tau),
\end{align}
\begin{align}\label{DG24}
\|F\nabla F\|_{\dot{B}^{n/2-1}} \leq \, C \|F\|_{\dot{B}^{n/2}} \|\nabla F\|_{\dot{B}^{n/2-1}} \leq \, C(1+\tau)^{-n/2}M^{2}(\tau).
\end{align}
From the above estimates (\ref{DG21})-(\ref{DG24}), we obtain
\begin{align}\label{DG2}
\|G_{2}\|_{\dot{B}^{n/2-1}} \leq \, C(1+\tau)^{-n/2}M^{2}(\tau) + C\, (1+\tau)^{-n/4}M(\tau)f(\tau).
\end{align}
Substitute (\ref{DG1})-(\ref{DG0}) and (\ref{DG2}) into (\ref{Decay1}), we will have
\begin{align*}
\sum_{q\geq R} E_{q}(t) \leq & e^{-c_{0}t}\sum_{q\geq R}E_{q}(0) + C\int_{0}^{t}e^{-c_{0}(t-\tau)}\Big( (1+\tau)^{-n/2}M^{2}(\tau)    \\
& + (1+\tau)^{-n/4}M(\tau)f(\tau) \Big) d\tau   \\
\leq & e^{-c_{0}t}\sum_{q\geq R}E_{q}(0) + M(t) \int_{0}^{t}e^{-c_{0}(t-\tau)}(1+\tau)^{-n/4}f(\tau)d\tau \\
& + M^{2}(t) \int_{0}^{t} e^{-c_{0}(t-\tau)}(1+\tau)^{-n/2} d\tau   \\
\leq & e^{-c_{0}t}\sum_{q\geq R}E_{q}(0) + C (1+t)^{-n/4}\delta M(t) + C(1+t)^{-n/2}M^{2}(t).
\end{align*}
So we obtain
\begin{align*}
(1+t)^{n/4}\sum_{q\geq R}E_{q}(\tau) \leq \, C\left( \|(a_{0}, F_{0})\|_{B^{n/2}} + \|v_{0}\|_{B^{n/2-1}} \right) + C\delta M(t) + CM^{2}(t).
\end{align*}
Combining the above inequality, Remark \ref{G1G2G3} and using properties of homogeneous Besov space, we obtain
\begin{align}\label{zuihou}
M(t) \leq \, C\left( \|(a_{0}, F_{0})\|_{B^{n/2}} + \|v_{0}\|_{B^{n/2-1}} \right) + C\delta M(t) + CM^{2}(t).
\end{align}
By taking $\delta > 0$ suitably small, we finally have
\begin{align}\label{zuihouhou}
M(t) \leq \, C\left( \|(a_{0}, F_{0})\|_{B^{n/2}} + \|v_{0}\|_{B^{n/2-1}} \right)
\end{align}
for all $0 \leq t\leq T$.
It follows from local well-posedness Theorem \ref{localeuler} and the above estimate (\ref{zuihouhou}) that
\begin{align*}
M(t) \leq C < \infty
\end{align*}
for all $t > 0$.
Hence, we obtain the desired decay estimates in Theorem \ref{MainTheorem}.

\section{Acknowledgements}

This research is support partially by National Natural Science Foundation of China under the grant no. 11131006, and by the National Basic Research Program of China under the grant no. 2013CB329404.

\end{document}